\newcommand{\Z}{\mathbb{Z}}
\newcommand{\Q}{\mathbb{Q}}
\newcommand{\R}{\mathbb{R}}
\newcommand{\N}{\mathbb{N}}
\newcommand{\LL}{\mathcal{L}}
\newtheorem*{theorem*}{Theorem}
\newtheorem{theorem}{Theorem}
\newtheorem{lemma}{Lemma}
\newtheorem{proposition}{Proposition}
\newtheorem{problem}{Problem}
\theoremstyle{remark}
\newtheorem{remark}{Remark}
\begin{document}
\title[$S$-Diophantine quadruples]{Finding all $S$-Diophantine quadruples for a fixed set of primes $S$}
\subjclass[2010]{11D61,11Y50}
\keywords{Baker's method, $S$-unit equations, $S$-Diophantine tuples}

\author{Volker Ziegler}
\address{V. Ziegler,
University of Salzburg,
Hellbrunnerstrasse 34/I,
A-5020 Salzburg, Austria}
\email{volker.ziegler\char'100sbg.ac.at}
\thanks{The author was supported by the Austrian Science Fund(FWF) under the project~I4406.}

\begin{abstract}
Given a finite set of primes $S$ and a $m$-tuple $(a_1,\dots,a_m)$ of positive, distinct integers we call the $m$-tuple $S$-Diophantine, if for each $1\leq i < j\leq m$ the quantity $a_ia_j+1$ has prime divisors coming only from the set $S$. For a given set $S$ we give a practical algorithm to find all $S$-Diophantine quadruples, provided that $|S|=3$.   
\end{abstract}

\maketitle

\section{Introduction}

Given an irreducible polynomial $f\in \Z[X,Y]$ and a set $A$ of positive integers we consider the product
$$ \Pi=\prod_{\substack{a,b\in A \\ a\neq b}} f(a,b).$$
It is an interesting question what is the largest prime divisor $P(\Pi)$ of $\Pi$ or alternatively what is the number of prime divisors $\omega(\Pi)$ of $\Pi$ in terms of the size of $|A|$. In case of $f(x,y)=x+y$ this question was intensively studied by several authors starting with Erd\H{o}s and Tur\'an \cite{Erdos:1934} who established that in this case $\omega(\Pi)\gg \log |A|$. It was shown by Erd\H{o}s, Stewart and Tijdeman \cite{Erdos:1988} that this is essentially best possible.

In the case of $f(x,y)=xy+1$ the investigation was started by S\'ark\"ozy \cite{Sarkozy:1992}. It was shown by Gy\H{o}ry et.al. that $\omega(\Pi)\gg \log |A|$. However it seems that this lower bound is far from being best possible and the situation is not so clear as in the case $f(x,y)=x+y$.

One recent approach to this problem was picked up by Szalay and Ziegler \cite{Szalay:2013a} who studied so-called $S$-Diophantine $m$-tuples. Let $S$ be a finite set of primes and  $(a_1,\dots,a_m)$ a $m$-tuple of positive, distinct integers with $a_1<\dots <a_m$. Then we call $(a_1,\dots,a_m)$ $S$-Diophantine, if for each $1\leq i < j\leq m$ the quantity $a_ia_j+1$ has prime divisors coming only from the set $S$. In other words if $A=\{a_1,\dots,a_m\}$ and $\Pi$ has only prime divisors coming form $S$, then $(a_1,\dots,a_m)$ is called a $S$-Diophantine $m$-tuple. Szalay and Ziegler~\cite{Szalay:2013a} conjectured that
if $|S|=2$, then no $S$-Diophantine quadruple exists. This conjecture has been confirmed for several special cases by Luca, Szalay and Ziegler~\cite{Szalay:2013a,Szalay:2013b,Szalay:2015,Ziegler:2019,Luca:2019}. Even a rather efficient algorithm has been described by Szalay and Ziegler \cite{Szalay:2015} that finds for a given set $S=\{p,q\}$ of two primes all $S$-Diophantine quadruples, if there exist any. However, the algorithm is strictly limited to the case that $|S|=2$ and does not work (nor any slightly modified version of the algorithm) for $|S|\geq 3$.

In the present paper we establish an algorithm that finds for any set $S=\{p,q,r\}$ of three primes all $S$-Diophantine quadruples. Unfortunately the algorithm is not so efficient as its counter part for $|S|=2$. However, when implemented in Sage \cite{sage} it takes on a usual PC
(Intel i7-8700) -- using a single core -- about six and a half hours to verify: 

\begin{theorem}\label{th:main}
 There is no $\{2,3,5\}$-Diophantine quadruple.
\end{theorem}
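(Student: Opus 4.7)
The plan is to apply the general algorithm promised in the paper to the specific set $S=\{2,3,5\}$ and to verify computationally that no quadruple survives. Suppose for contradiction that $(a_1,a_2,a_3,a_4)$ with $a_1<a_2<a_3<a_4$ is such a quadruple, and write $a_ia_j+1=2^{x_{ij}}3^{y_{ij}}5^{z_{ij}}$ for each of the six pairs. First I would extract $S$-unit equations from combinations of these relations. The linear differences
\[ a_i(a_k-a_j)=2^{x_{ik}}3^{y_{ik}}5^{z_{ik}}-2^{x_{ij}}3^{y_{ij}}5^{z_{ij}} \]
together with the quadratic identity
\[ (a_1a_3+1)(a_2a_4+1)-(a_1a_4+1)(a_2a_3+1)=(a_2-a_1)(a_4-a_3) \]
(and its two analogues) each express a difference of large $S$-units as a comparatively small positive integer.

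Next I would apply Matveev's theorem on linear forms in logarithms to each such equation. Dividing by the dominant term yields a linear form $\Lambda$ in $\log 2,\log 3,\log 5$ with integer coefficients whose absolute value is tiny, of order $N/u$, where $N$ is the small right-hand side and $u$ the dominant $S$-unit. Matveev supplies a lower bound of the shape $\log|\Lambda|\gg -C\log H$, with $H$ an upper bound on the absolute values of the exponents, while elementary estimates bound $\log a_4$ linearly in $H$. Combining these inequalities across the relations yields an explicit, albeit astronomical, absolute bound $H_0$ on all $x_{ij},y_{ij},z_{ij}$.

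The main obstacle is then to cut $H_0$ down to something enumerable. For each of the $S$-unit equations I would build a lattice encoding $\Lambda$, run LLL, and apply a Baker--Davenport style argument to produce a much smaller bound $H_1$; feeding $H_1$ back into Matveev and iterating should drive the bound to a stable size allowing a direct search. Because $|S|=3$ rather than $2$, the lattices have larger dimension and the reduction is considerably less effective than in the continued-fraction setting used by Szalay and Ziegler for $|S|=2$, which is exactly why the computation takes hours instead of seconds. The final step is a brute-force enumeration of $S$-units $2^x3^y5^z-1$ up to the reduced bound, reading off candidate products $a_ia_j$ and checking whether any combine into a consistent quadruple; for $S=\{2,3,5\}$ the six-and-a-half-hour Sage run verifies that none do. The real engineering challenge throughout is keeping the reduction tight enough that the final enumeration finishes in practical time, and choosing the cleverest among the available $S$-unit equations to feed into each reduction cycle.
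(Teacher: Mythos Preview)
Your outline captures the broad shape of the argument---Matveev plus LLL-reduction followed by enumeration---but it glosses over the step on which the feasibility of the whole computation hinges, and as written the iteration you describe would stall well short of an enumerable bound.

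The archimedean linear forms coming from your quadratic identities (the paper's $T_1$ and $T_2$) only bound the \emph{small} products $s_1=ab+1$ and $s_2=ac+1$; none of them controls $s_3,s_5,s_6$, which involve $d$. The only archimedean form that does is $\Lambda_3=\log(b/a)+(\alpha_3-\alpha_5)\log p+\cdots$, and it carries the inhomogeneous term $\log(b/a)$, whose value is unknown a priori; hence it is not amenable to LLL until $a$ and $b$ have already been pinned down. Consequently your ``feed $H_1$ back into Matveev and iterate'' loop converges only for $\log s_1,\log s_2$ (to roughly $70$--$140$), while the global exponent bound $M$ stabilises around $10^{16}$ via Lemma~\ref{lem:bound-absolute-easy}. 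A flat search over six $S$-units at that scale is hopeless, and even enumerating candidate triples $(s_1,s_2,s_4)$ with $\log s_4\lesssim 150$ is out of practical reach.

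The paper closes this gap with two ingredients you omit. First, it brings in \emph{$p$-adic} linear forms: a combinatorial analysis of the system~\eqref{eq:Sunit-sys-gen} (Lemma~\ref{lem:cases}, Proposition~\ref{prop:one-is-large}) shows that for each prime all but one exponent is already controlled by $M_0,M_4$, and then Bugeaud--Laurent's two-variable $p$-adic bound (Lemma~\ref{lem:Bugeaud}, Proposition~\ref{lem:bound-absolute}) caps the remaining largest exponent. Iterating the archimedean LLL \emph{together with} this $p$-adic step is what drives $M_0$ down to about $34$ and $M$ to about $10^{6}$. Second, the enumeration is not a flat search: one first lists all $S$-Diophantine \emph{triples} $(a,b,c)$ compatible with the reduced bounds on $s_1,s_2,s_4$ (this is the six-and-a-half-hour step), and only afterwards, with $b/a$ now known, applies LLL to $\Lambda_3$ to bound $d$ for each fixed triple. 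Without these two pieces the plan as stated is not practically feasible.
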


However the case $S=\{2,3,5\}$ is in some sense the slowest instance for small primes $p,q$ and $r$. Thus within four days of runtime we could establish the following result:

\begin{theorem}\label{th:search}
 Let $S=\{p,q,r\}$ with $2\leq p<q<r\leq 100$. Then all $S$-Diophantine quadruples are listed in Table \ref{tab:list}.
\end{theorem}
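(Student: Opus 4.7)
The proof strategy is purely computational: apply the algorithm developed in the later sections of this paper to each of the $\binom{25}{3}=2300$ unordered triples $\{p,q,r\}$ of primes with $p<q<r\leq 100$, and collect into Table \ref{tab:list} every quadruple that any of these runs produces. Since by construction the algorithm is guaranteed to return \emph{every} $S$-Diophantine quadruple for a given three-element $S$, running it exhaustively on all $2300$ instances certifies the claimed completeness. Theorem \ref{th:main} is just the special case $S=\{2,3,5\}$.

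Before launching the full search, I would invest some effort in tuning the two expensive steps of the algorithm, namely the Baker-type upper bound on the exponents in the underlying $S$-unit equation and the subsequent reduction (via an LLL or Baker--Davenport step). The runtime on a given triple is dominated by the size of that bound and by the combinatorial cost of the residue sieve; both are sensitive to the primes involved and to any special multiplicative relations among $p,q,r$. This is consistent with the remark in the introduction that $\{2,3,5\}$ is atypically slow: the primes are small, so many residue classes modulo small moduli survive the local sieve.

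Operationally, I would organise the computation triple-by-triple and run them independently in parallel, so that a single difficult instance does not block the rest. The main obstacle is simply aggregate runtime: the slowest single triple already takes about six and a half hours on one core (Theorem \ref{th:main}), and a priori it is not obvious that the remaining $2299$ triples are all comparably fast. The fact that the entire search completes within four days on one core, as reported in the statement, is what makes the theorem feasible to prove.

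Finally, once the machine search terminates I would perform two sanity checks before declaring the proof complete. First, for each quadruple listed in Table \ref{tab:list} I would \emph{directly} verify that all six products $a_ia_j+1$ factor over the claimed set $S$, so that the listed quadruples are genuinely $S$-Diophantine and not artifacts of the output formatting. Second, I would re-run a representative sample of triples with an independent implementation to guard against coding bugs. Once these pass, the theorem follows from the correctness of the algorithm (proved in the body of the paper) together with the exhaustiveness of the search over the $2300$ candidate sets $S$.
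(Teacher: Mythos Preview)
Your proposal is correct and matches the paper's approach: Theorem \ref{th:search} is proved precisely by running the eight-step algorithm of Section~\ref{sec:reduction} on each of the $\binom{25}{3}=2300$ prime triples with $p<q<r\leq 100$, and the paper reports this took about four days on a single core. The only minor inaccuracy is your characterisation of the bottleneck: in the paper the expensive step is not the Baker bound or the LLL reduction but Step~VI, the brute-force enumeration of all $S$-Diophantine \emph{triples} below the reduced bound, which is what makes $\{2,3,5\}$ the slowest instance.
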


\begin{table}[ht]
    
    \begin{tabular}{|c|c|} 
    \hline $S$ & Quadruples \\\hline\hline
    $\{2,3,11\}$ & $(1,3,5,7)$ \\\hline
     $\{2, 3, 29\}$ & $(1, 5, 7, 23)$ \\\hline
       $\{2, 11, 37\}$ & $(1, 3, 7, 21)$ \\\hline
    \end{tabular}
    \caption{$S$-Diophantine quadruples.}
    \label{tab:list}
\end{table}

Thus Theorem \ref{th:search} raises the following problem:

\begin{problem}
 Are the $S$-Diophantine quadruples listed in Table \ref{tab:list} all $S$-Diophantine quadruples with $|S|=3$?
\end{problem}

Although we present the method only in the case that $|S|=3$ obvious modifications to the algorithm would provide an algorithm that would work for any set $S$ of primes with $|S|\geq 3$. However, already in the case that $|S|=4$ the algorithm would take instead of several hours several months of computation time. Thus a systematic search for $S$-Diophantine quadruples in the case $|S|=4$ seems to be not feasible.  Therefore we refer from discussing the case $|S|>3$ in this paper.

In the next section we remind some of the auxiliary results we need to deal with $S$-Diophantine quadruples. In Section \ref{sec:bound} we establish several results that will allow us to find small upper bounds for the relevant unknowns and will also allow us to apply LLL-reduction to reduce these bounds. The LLL-reduction will be discussed in Section \ref{sec:LLL}. In Section \ref{sec:reduction} we will give details how to apply the LLL-reduction method to find upper bounds small enough to  enumerate all possible $S$-Diophantine quadruples for a given set $S=\{p,q,r\}$ of three primes.  

\section{Auxiliary results}

Assume that $(a,b,c,d)$ is a $S$-Diophantine quadruple, with $S=\{p,q,r\}$. In particular, we assume that $a<b<c<d$. Moreover, we will assume that $p<q<r$. Let us write
\begin{align*}
 ab+1=s_1 &=p^{\alpha_1}q^{\beta_1}r^{\gamma_1} &
 bc+1=s_4 &=p^{\alpha_4}q^{\beta_4}r^{\gamma_4} \\
 ac+1=s_2 &=p^{\alpha_2}q^{\beta_2}r^{\gamma_2}&
 bd+1=s_5 &=p^{\alpha_5}q^{\beta_5}r^{\gamma_5} \\
 ad+1=s_3 &=p^{\alpha_3}q^{\beta_3}r^{\gamma_3}&
 cd+1=s_6 &=p^{\alpha_6}q^{\beta_6}r^{\gamma_6}.
\end{align*}
The almost trivial observation that
$$ab \cdot cd=(s_1-1)(s_6-1)=(s_2-1)(s_5-1)=ac\cdot bd$$
 yields the (non-linear) $S$-unit equation
$$s_1s_6-s_1-s_6=s_2s_5-s_2-s_5.$$
Similar computations lead us to the following system of $S$-unit equations
\begin{equation}\label{eq:Sunit-sys-gen}
 \begin{split}
  s_1s_6-s_1-s_6&=s_2s_5-s_2-s_5,\\
  s_1s_6-s_1-s_6&=s_3s_4-s_3-s_4,\\
  s_2s_5-s_2-s_5&=s_3s_4-s_3-s_4.
 \end{split}
\end{equation}
Let us call two indices $i,j\in\{1,\dots,6\}$ complementary, if $i+j=7$. Note that if $i$ and $j$ are complementary, then we have $(s_i-1)(s_j-1)=abcd$. With these notations at hand the following lemmas can be proved (e.g. see \cite{Szalay:2013a}) by elementary means.

\begin{lemma}\label{lem:min_alpha}
 The smallest two members of each of the sets 
 $$\{\alpha_1,\alpha_2,\alpha_5,\alpha_6\},\quad \{\alpha_1,\alpha_3,\alpha_4,\alpha_6\},\quad \{\alpha_2,\alpha_3,\alpha_4,\alpha_5\}$$
 are equal. Similar statements also hold for the $\beta$'s and $\gamma$'s
\end{lemma}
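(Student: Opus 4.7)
The plan is to use the three $S$-unit equations in~\eqref{eq:Sunit-sys-gen}. The key observation is that their index sets are exactly $\{1,2,5,6\}$, $\{1,3,4,6\}$ and $\{2,3,4,5\}$, i.e.\ the three sets appearing in the lemma. So it suffices to treat the first equation and copy the argument verbatim for the other two; similarly, the claims for the $\beta_i$ and $\gamma_i$ will follow by replacing the $p$-adic valuation $v_p$ with $v_q$ and $v_r$ throughout.

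First I would rearrange
\[
s_1 s_6 - s_1 - s_6 \;=\; s_2 s_5 - s_2 - s_5
\]
into the form
\[
s_2 + s_5 - s_1 - s_6 \;=\; s_2 s_5 - s_1 s_6,
\]
separating the linear and the bilinear contributions. Then I would apply $v_p$ to both sides under the contradiction hypothesis that the minimum $m$ of $\{\alpha_1,\alpha_2,\alpha_5,\alpha_6\}$ is attained only once. By the symmetry between the two complementary pairs $(1,6)$, $(2,5)$ and between the members of each pair, I may assume the unique minimum sits at $\alpha_1$. On the left, exactly one of the four terms has $v_p$ equal to $m$ and the other three have $v_p>m$, so $v_p(\mathrm{LHS})=m$. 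On the right, $v_p(s_1 s_6)=\alpha_1+\alpha_6>m$, because $\alpha_6>\alpha_1\ge 0$ forces $\alpha_6\ge 1$, and likewise $v_p(s_2 s_5)=\alpha_2+\alpha_5>m$; hence $v_p(\mathrm{RHS})>m$. This contradicts the identity, so the minimum must be attained at least twice, which is exactly the claim that the two smallest members of $\{\alpha_1,\alpha_2,\alpha_5,\alpha_6\}$ coincide.

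I do not foresee any serious obstacle: this is a standard ``ultrametric'' argument. The only detail that deserves a moment's care is that the strict inequality $\alpha_i > m$ on a non-negative integer automatically upgrades to $\alpha_i \ge m+1$, which is what makes the two product valuations on the RHS strictly exceed $m$. Running exactly the same template on the remaining two equations of~\eqref{eq:Sunit-sys-gen}, and then rerunning the whole procedure with $v_q$ and $v_r$ in place of $v_p$, yields all three statements simultaneously.
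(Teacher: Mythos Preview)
Your argument is correct and is precisely the ``$p$-adic consideration'' the paper alludes to (the paper gives no details beyond citing \cite{Szalay:2013a}). The only cosmetic point is that your ``likewise'' for $v_p(s_2s_5)>m$ uses a slightly different reason than for $v_p(s_1s_6)>m$ --- namely $\alpha_2\ge m+1$ and $\alpha_5\ge 0$ rather than $\alpha_1=m$ and $\alpha_6\ge 1$ --- but the conclusion is of course the same.
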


\begin{proof}
 This follows by $p$-adic considerations of the system of equations \eqref{eq:Sunit-sys-gen} (cf. \cite[Proposition 1]{Szalay:2013a}).
\end{proof}

\begin{lemma}\label{lem:abcd-div}
 Assume that $(a,b,c,d)$ is a $S$-Diophantine quadruple. Then we have
\begin{align*}
a&|\gcd\left(\frac{s_2-s_1}{\gcd(s_2,s_1)},\frac{s_3-s_1}{\gcd(s_3,s_1)},\frac{
s_3-s_2}{\gcd(s_3,s_2)}\right),\\
b&|\gcd\left(\frac{s_4-s_1}{\gcd(s_4,s_1)},\frac{s_5-s_1}{\gcd(s_5,s_1)},\frac{
s_5-s_4}{\gcd(s_5,s_4)}\right),\\
c&|\gcd\left(\frac{s_4-s_2}{\gcd(s_4,s_2)},\frac{s_6-s_2}{\gcd(s_6,s_2)},\frac{
s_6-s_4}{\gcd(s_6,s_4)}\right),\\
d&|\gcd\left(\frac{s_5-s_3}{\gcd(s_5,s_3)},\frac{s_6-s_3}{\gcd(s_6,s_3)},\frac{
s_6-s_5}{\gcd(s_6,s_5)}\right).
\end{align*}
\end{lemma}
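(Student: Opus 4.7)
The plan is to exploit the trivial identities $ab = s_1-1$, $ac=s_2-1$, $ad=s_3-1$, $bc=s_4-1$, $bd=s_5-1$, $cd=s_6-1$, which say exactly which three $s_i-1$ each of $a,b,c,d$ divides. For instance, $a$ divides $s_1-1, s_2-1, s_3-1$, so by subtracting pairs we immediately get $a \mid s_2-s_1$, $a \mid s_3-s_1$, $a \mid s_3-s_2$; and by exactly the same pattern $b$ divides the three differences $s_4-s_1, s_5-s_1, s_5-s_4$, the unknown $c$ divides $s_4-s_2, s_6-s_2, s_6-s_4$, and $d$ divides $s_5-s_3, s_6-s_3, s_6-s_5$. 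The indices that appear here are precisely the ones in the statement of the lemma.

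The one nontrivial step is upgrading ``$a \mid s_i - s_j$'' to ``$a \mid \tfrac{s_i - s_j}{\gcd(s_i,s_j)}$''. For this I would use the coprimality input that is baked into the setup: since $s_i - 1$ is a multiple of $a$, we have $s_i \equiv 1 \pmod a$ and hence $\gcd(a, s_i) = 1$. Doing this for both $s_i$ and $s_j$ shows that $\gcd(a, \gcd(s_i,s_j)) = 1$. Now $a$ and $\gcd(s_i, s_j)$ are two coprime divisors of the integer $s_i - s_j$, so their product divides $s_i - s_j$, which rearranges to $a \mid \tfrac{s_i-s_j}{\gcd(s_i,s_j)}$. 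Combining the three such divisibilities yields $a \mid \gcd\!\left(\tfrac{s_2-s_1}{\gcd(s_2,s_1)}, \tfrac{s_3-s_1}{\gcd(s_3,s_1)}, \tfrac{s_3-s_2}{\gcd(s_3,s_2)}\right)$, and the analogous arguments for $b, c, d$ (using the corresponding triples of $s_i$'s) finish the other three lines.

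I do not anticipate a serious obstacle: the argument is purely elementary number theory, and the only thing to be careful about is ensuring the coprimality $\gcd(a, s_i) = 1$ before cancelling $\gcd(s_i, s_j)$, which as noted is automatic from the $S$-Diophantine shape $s_i = (\text{product involving } a) + 1$. In the write-up I would simply state the argument for $a$ in detail and remark that the cases of $b, c, d$ are entirely symmetric, differing only in which three of the six $s_i$ are relevant.
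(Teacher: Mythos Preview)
Your argument is correct: from $s_i\equiv 1\pmod a$ for $i\in\{1,2,3\}$ you get both $a\mid s_i-s_j$ and $\gcd(a,\gcd(s_i,s_j))=1$, and since $\gcd(s_i,s_j)$ also divides $s_i-s_j$, coprimality gives $a\mid (s_i-s_j)/\gcd(s_i,s_j)$; the other three lines are symmetric. The paper itself does not present a proof but simply cites \cite[Lemma~3]{Szalay:2013a}, so there is nothing further to compare --- your elementary argument is exactly the kind of proof one expects to find there.
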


\begin{proof}
 A proof can be found in \cite[Lemma 3]{Szalay:2013a}.
\end{proof}

\begin{lemma}\label{lem:quad-res}
 Let $(a,b,c)$ be a $S$-Diophantine triple with 
 \begin{align*}
  ab+1&=s_1=p^{\alpha_1}q^{\beta_1}r^{\gamma_1}\\
  ac+1&=s_2=p^{\alpha_2}q^{\beta_2}r^{\gamma_2}\\
  bc+1&=s_4=p^{\alpha_4}q^{\beta_4}r^{\gamma_4}.
  \end{align*}
  Then we have:
  \begin{itemize}
  \item If $p=2$, then $\min\{\alpha_1,\alpha_2,\alpha_4\}\leq 1$.
  \item If $p\equiv 3\mod 4$, then $\min\{\alpha_1,\alpha_2,\alpha_4\}=0$.
  \item If $q\equiv 3\mod 4$, then $\min\{\beta_1,\beta_2,\beta_4\}=0$.
  \item If $r\equiv 3\mod 4$, then $\min\{\gamma_1,\gamma_2,\gamma_4\}=0$.
  \end{itemize}
\end{lemma}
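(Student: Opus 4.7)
The plan is to reduce everything to the single identity
$$ab\cdot ac\cdot bc=(abc)^2,$$
combined with the congruences $ab\equiv ac\equiv bc\equiv -1\pmod{p}$ that must hold whenever $p$ divides all three of $s_1,s_2,s_4$. Multiplying the three congruences then forces $(abc)^2\equiv -1\pmod p$, i.e.\ $-1$ must be a quadratic residue modulo $p$. This single observation will drive every case of the lemma.

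For the second bullet, if $p\equiv 3\pmod 4$ then $-1$ is a non-residue mod $p$, so the above is impossible. Hence at least one of $\alpha_1,\alpha_2,\alpha_4$ must vanish, which is exactly $\min\{\alpha_1,\alpha_2,\alpha_4\}=0$. The third and fourth bullets (for $q$ and $r$) are literally the same argument with the roles of the primes permuted, so I would handle all three primes in a single paragraph.

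The only case requiring an extra twist is $p=2$, where $-1\equiv 1$ is trivially a square. Here I would refine the argument by working modulo $4$ instead of modulo $2$: supposing for contradiction that $\min\{\alpha_1,\alpha_2,\alpha_4\}\geq 2$, each of $ab,ac,bc$ is $\equiv -1\equiv 3\pmod 4$. In particular $a,b,c$ are all odd (otherwise one of the products would be even), so $a^2\equiv 1\pmod 4$. Multiplying $ab\equiv 3$ and $ac\equiv 3$ gives $a^2 bc\equiv 1\pmod 4$, but $a^2 bc\equiv 1\cdot 3\equiv 3\pmod 4$, a contradiction. So $\min\{\alpha_1,\alpha_2,\alpha_4\}\leq 1$, as claimed.

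There is no real obstacle here: the proof is purely elementary, and the only small subtlety is noticing that when $p=2$ one cannot use the quadratic residue argument directly and must instead compute modulo $4$. Once that observation is made, all four bullets follow in a few lines.
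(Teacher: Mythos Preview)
Your proof is correct and essentially matches the paper's approach: for the odd primes congruent to $3\pmod 4$ the paper simply cites \cite[Lemma~2.2]{Szalay:2013b}, whose argument is exactly your quadratic residue observation that $(abc)^2\equiv -1\pmod p$, and for $p=2$ the paper likewise assumes $\min\{\alpha_1,\alpha_2,\alpha_4\}\geq 2$ and derives $(abc)^2\equiv -1\equiv 3\pmod 4$, which is the same contradiction you obtain (you multiply two of the congruences, the paper multiplies all three).
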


\begin{proof}
 A proof can be found in \cite[Lemma 2.2]{Szalay:2013b} except for the case that $p=2$. However, in the case that $p=2$ we slightly change the proof. That is we assume that $\min\{\alpha_1,\alpha_2,\alpha_4\}\geq 2$ and obtain
 $$ a^2b^2c^2=(s_1-1)(s_2-1)(s_3-1)\equiv -1\equiv 3 \mod 4$$
 which is an obvious contradiction.
\end{proof}

In order to obtain a first upper bound in the next section we will apply results on lower bounds for linear forms in (complex and $p$-adic) logarithms. Therefore let $\eta\neq 0$ be an algebraic number of degree $\delta$ and let
$$a(X-\eta_1)\cdots (X-\eta_\delta) \in \Z[X]$$
be the minimal polynomial of $\eta$. Then the absolute logarithmic Weil height is defined by
$$h(\eta)=\frac 1\delta \left(\log |a|+\sum_{i=1}^\delta \max\{0,\log|\eta_i|\}\right).$$
We will mainly need the notation of height for rational numbers. Therefore let us remark that if $p/q \in \Q$ with $p,q$ integers such that $\gcd(p,q)=1$, then $h(p/q)=\max\{\log |p|,\log |q|\}$. 
With this basic notation we have the following result on lower bounds for linear forms in (complex) logarithms due to Matveev \cite{Matveev:2000}.

\begin{lemma}\label{lem:Matveev}
Denote by $\eta_1,\ldots,\eta_n$ algebraic numbers, nor
$0$ neither $1$, by $\log \eta_1,\ldots$, $\log \eta_n$
determinations of their logarithms, by $D$ the degree over $\Q$ of
the number field $K = \Q(\eta_1,\ldots,\eta_n)$, and by $b_1,\ldots,b_n$
rational integers. Furthermore let $\kappa=1$ if $K$ is real
and $\kappa=2$ otherwise. Choose
$$A_i\geq \max\{D h(\eta_i),|\log \alpha_i|\} \quad (1\leq i\leq n)$$
and
\[E=\max\{1,\max \{ |b_j| A_j /A_n: 1\leq j \leq n \}\}.\]
Assume that $b_n\neq 0$ and
$\log \eta_1,\ldots,\log \eta_n$ are linearly independent
over $\Z$. Then
\[\log |b_1\log \eta_1+\cdots+b_n \log \eta_n|\geq -C(n)C_0 W_0 D^2
\Omega,\]
with
\begin{gather*}
\Omega=A_1\cdots A_n, \\
C(n)=C(n,\kappa)= \frac {16}{n! \kappa} e^n (2n +1+2 \kappa)(n+2)
(4(n+1))^{n+1} \left( \frac 12 en\right)^{\kappa}, \\
C_0= \log\left(e^{4.4n+7}n^{5.5}D^2 \log(eD)\right), \quad W_0=\log(1.5eED
\log(eD)).
\end{gather*}
\end{lemma}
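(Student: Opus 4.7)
This is Matveev's theorem on linear forms in logarithms, a deep transcendence result; I would not attempt to reprove it from scratch but rather indicate the Baker--Matveev strategy that yields a bound of this shape. The overall plan is a proof by contradiction: assume that $\Lambda=b_1\log\eta_1+\cdots+b_n\log\eta_n$ is much smaller than the claimed lower bound and derive an inconsistency via an interpolation determinant built from the $\eta_i$.

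First I would set up the parameters. Fix a large integer $L$ and a tuple of ``interpolation degrees'' $(L_1,\dots,L_n)$ chosen so that $L_i\log A_i$ is roughly balanced across $i$, and a grid of sampling points $(k_1,\dots,k_{n-1},z)$ where the $k_j$ run over short arithmetic progressions and $z$ runs over integers. With these I would form the square matrix whose $(\vec{\lambda},\vec{k},z)$-entry is $z^{\lambda_0}\prod_{i=1}^n\eta_i^{\lambda_i(k_i+b_i z/b_n)}$ (with a suitable branch of the exponential), where the rows are indexed by multi-indices $\vec\lambda$ of bounded total degree. This is the ``interpolation determinant'' of Laurent; its rows are values of an analytic function at lattice points.

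Next come the two-sided estimates. On the analytic side, the assumption that $\Lambda$ is tiny means the monomials $\prod\eta_i^{\lambda_i(k_i+b_i z/b_n)}$ are, up to a very small error, polynomial in $z$, so the maximum modulus principle (enlarging the disk on which $z$ varies) gives a strong upper bound on $|\det|$. On the arithmetic side, if the determinant is nonzero then its algebraic conjugates can be bounded in terms of heights, producing the Liouville-type lower bound $|\det|\ge\exp(-C\,D\,L_1A_1\cdots L_nA_n)$. Comparing the two forces the determinant to vanish.

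The final step is a zero lemma: a vanishing interpolation determinant of this shape forces a genuine multiplicative dependence among the $\eta_i$, contradicting the hypothesis that $\log\eta_1,\dots,\log\eta_n$ are $\Z$-linearly independent. Hence $\Lambda$ cannot be smaller than the asserted bound, and tracking the parameters gives the explicit constants $C(n)$, $C_0$, $W_0$.

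The main obstacle, and the reason the result is attributed to Matveev rather than to Baker, is the sharp dependence on $n$ in the constant $C(n)$ — in particular the factor $1/n!$ and the $\kappa$-dependence reflecting whether $K$ is real. Obtaining these requires a very careful combinatorial optimization of the interpolation parameters and a refined use of Laplace expansion, replacing the cruder Siegel-lemma construction of an auxiliary function used in earlier versions of Baker's method. I would rely on the detailed parameter choices in \cite{Matveev:2000} rather than attempt to optimize them afresh.
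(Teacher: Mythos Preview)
The paper does not prove this lemma at all: it is stated as an auxiliary result and attributed to Matveev \cite{Matveev:2000}, with no argument beyond the citation. Your sketch of the Baker--Matveev interpolation-determinant method is a reasonable high-level outline of how the original proof proceeds, but it goes far beyond what the paper does (or needs). For the purposes of this paper a single sentence ``This is due to Matveev \cite{Matveev:2000}'' would suffice, and indeed that is exactly how the paper handles it.
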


In our case of main interest where $|S|=3$ we can apply results on linear forms in two $p$-adic logarithms. For a prime $p$ we denote by $\Q_p$ the field of $p$-adic numbers with the standard $p$-adic valuation $\nu_p$.
As above let $\eta_1,\eta_2$ be algebraic numbers over $\Q$ and we regard them as elements of the field $K_p=\Q_p(\eta_1,\eta_2)$. In our case $\eta_1$ and $\eta_2$ will be rational integers, thus $K_p=\Q_p$. Similar as in the case of Matveev's theorem above we have to use a modified height. In particular, we write
$$
h'(\eta_i) \geq \max\left\{h(\eta_i),\log p \right\}, \ (i=1,2).
$$

With these notations at hand, let us state the result due to Bugeaud and Laurent~\cite[Corollary 1]{Bugeaud:1996}):

\begin{lemma}\label{lem:Bugeaud}
Let $b_1,b_2$ be positive integers and suppose that $\eta_1$ and $\eta_2$ are multiplicatively independent algebraic numbers such that $\nu_p(\eta_1)=\nu_p(\eta_2)=0$. Put
$$
E':=\frac{b_1}{h'(\eta_2)}+\frac{b_2}{h'(\eta_1)}.
$$
and
$$
\label{eq:B}
E:=\max\left\{\log E'+\log\log p+0.4,10,10\log p\right\}.
$$
Then we have
$$
\nu_p(\eta_1^{b_1}\eta_2^{b_2}-1)\leq \frac{24 p}{(\log p)^4}E^2h'(\eta_1)h'(\eta_2).
$$

\end{lemma}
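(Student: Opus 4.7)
The statement is due to Bugeaud and Laurent, so rather than present new arguments I sketch how the standard interpolation determinant method of Laurent would be applied to establish such a bound. Set $\Lambda := \eta_1^{b_1}\eta_2^{b_2} - 1$, assume for contradiction that $\nu_p(\Lambda)$ is strictly larger than the right hand side of the asserted bound, and aim at producing a nonzero algebraic integer $\Delta$ whose $p$-adic valuation is forced to be simultaneously very large (because of the smallness of $\Lambda$) and very small (via the product formula), so that the two estimates are in conflict.

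First I would fix integer parameters $L$ and $T$ depending on $b_1,b_2,h'(\eta_1),h'(\eta_2)$ and $p$, to be optimized at the end. One then forms an $L\times L$ matrix with entries of the form $\eta_1^{\lambda_i b_2 t_j}\eta_2^{\lambda_i b_1 t_j}$, where $(\lambda_i)$ and $(t_j)$ run through suitable integer grids, and sets $\Delta$ equal to its determinant. The crucial observation is that whenever $\Lambda$ is $p$-adically extremely small, the congruence $\eta_1^{b_1}\equiv \eta_2^{-b_2}\pmod{p^{\nu_p(\Lambda)}}$ produces many near-linear dependencies among the rows, so one can extract from them a lower bound roughly of the form $\nu_p(\Delta)\geq c_1 L^2\,\nu_p(\Lambda) - c_2 L^2$.

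Next I would estimate $\log|\Delta|$ at the archimedean place by Hadamard's inequality, bounding each matrix entry in terms of $h'(\eta_1)$ and $h'(\eta_2)$; the other non-archimedean contributions are harmless because $\nu_p(\eta_i)=0$ and $\eta_i$ has controlled height. Feeding these bounds into the product formula $\sum_v \log|\Delta|_v = 0$ gives a comparison of the shape
\[
L^2\,\nu_p(\Lambda)\log p \;\leq\; C_1 L^2 \bigl(b_1 h'(\eta_1) + b_2 h'(\eta_2)\bigr) + C_2 L^2,
\]
and a careful choice of $L$ and $T$ in terms of $E'$ then reproduces the quoted bound with the explicit constant $24p/(\log p)^4$ and the factor $E^2$ (the logarithmic terms in the definition of $E$ absorb the technical remainders).

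The main obstacle is ensuring $\Delta\neq 0$, because the Liouville comparison above is vacuous otherwise. This is the role of a zero lemma: vanishing of $\Delta$ would force a nonzero polynomial of bounded bidegree to vanish at sufficiently many points of the subgroup generated by $(\eta_1^{b_2},\eta_2^{b_1})$ in $\overline{\Q}^{\times}\times \overline{\Q}^{\times}$, contradicting the multiplicative independence of $\eta_1$ and $\eta_2$. The delicate optimization of $L, T$ needed to recover the precise constant and the exact shape of $E$ in the statement is the technical heart of \cite{Bugeaud:1996}.
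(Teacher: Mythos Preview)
The paper does not prove this lemma at all: it is simply quoted as \cite[Corollary 1]{Bugeaud:1996} and used as a black box. So there is no ``paper's own proof'' to compare your sketch against.

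Your outline is a fair high-level description of the Laurent interpolation determinant method that underlies \cite{Bugeaud:1996}: one builds an auxiliary determinant $\Delta$ from a matrix of monomials in $\eta_1,\eta_2$, obtains a large $p$-adic valuation for $\Delta$ from the assumed $p$-adic smallness of $\Lambda$, bounds the archimedean size via Hadamard, and uses a zero lemma (relying on multiplicative independence) to ensure $\Delta\neq 0$. That said, as written it is only a sketch, not a proof: the displayed inequality you wrote down after the product formula step does not yet exhibit the product $h'(\eta_1)h'(\eta_2)$ or the $E^2$ dependence, and the honest work --- the precise choice of the interpolation matrix, the combinatorial weight in the valuation lower bound, and the parameter optimization that produces the constant $24p/(\log p)^4$ --- is deferred entirely to the reference. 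Since the paper itself also defers to \cite{Bugeaud:1996}, your treatment is consistent with the paper's, just more verbose; for the purposes of this paper a one-line citation would suffice.
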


The next lemma is an elementary, but rather useful result due to Peth{\H o} and de Weger \cite{Pethoe:1986}. For a proof of Lemma \ref{lem:pdw} we refer to \cite[Appendix B]{Smart:DiGL}.

\begin{lemma} \label{lem:pdw}
Let $u,v \geq 0, h \geq 1$ and $x \in \R$ be the largest solution of $x=u+v(\log{x})^h$. Then
$$
x<\max\{2^h(u^{1/h}+v^{1/h}\log(h^hv))^h, 2^h(u^{1/h}+2e^2)^h\}.
$$
\end{lemma}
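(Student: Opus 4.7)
The plan is to reduce the problem to the case $h=1$ by a substitution, and then invoke the elementary bound on $y\le A+B\log y$.

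\textbf{Step 1: Reduce to the case $h=1$.} Since $h\ge 1$, the map $t\mapsto t^{1/h}$ is concave on $[0,\infty)$ with value $0$ at $t=0$, hence subadditive: $(a+b)^{1/h}\le a^{1/h}+b^{1/h}$ for $a,b\ge 0$. Applying this to $x=u+v(\log x)^h$ gives
$$
x^{1/h}\le u^{1/h}+v^{1/h}\log x.
$$
Setting $y=x^{1/h}$, so that $\log x=h\log y$, this becomes
$$
y\le A+B\log y,\qquad A:=u^{1/h},\ B:=hv^{1/h}.
$$

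\textbf{Step 2: Handle the affine-logarithmic inequality.} One splits into two cases according to the size of $B$. If $B$ is ``small'' (say $B<e^2$), then $B\log B$ is not a useful quantity and instead one uses a crude bound of the form $y\le 2(A+2e^2)$, obtained by checking directly that $g(y):=y-A-B\log y$ becomes positive once $y\ge 2(A+2e^2)$. If $B\ge e^2$, one shows $g(y_0)\ge 0$ for $y_0=2(A+B\log B)$ by a direct computation using $\log(2A+2B\log B)\le \log B+\text{(controlled terms)}$, which forces $y\le y_0$. Combining both, one obtains
$$
y\le \max\bigl\{2(A+B\log B),\ 2(A+2e^2)\bigr\}.
$$

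\textbf{Step 3: Translate back.} Substituting $A=u^{1/h}$, $B=hv^{1/h}$ and using $h\log(hv^{1/h})=\log(h^h v)$, the first alternative becomes $y\le 2(u^{1/h}+v^{1/h}\log(h^h v))$, while the second is $y\le 2(u^{1/h}+2e^2)$. Raising to the $h$-th power via $x=y^h$ yields the desired bound
$$
x<\max\bigl\{2^h(u^{1/h}+v^{1/h}\log(h^h v))^h,\ 2^h(u^{1/h}+2e^2)^h\bigr\}.
$$

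The main technical obstacle is the case analysis in Step~2: getting the correct universal constants so that the two alternatives cover all $B\ge 0$ without leaving a gap. Since this is standard (see \cite[Appendix~B]{Smart:DiGL}), we would appeal to the reference for the precise constants rather than rederive them from scratch.
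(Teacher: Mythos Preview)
The paper does not prove this lemma at all; it simply states the result and refers the reader to \cite[Appendix~B]{Smart:DiGL}. Your proposal sketches precisely the standard argument found in that reference (substitute $y=x^{1/h}$ via subadditivity of $t\mapsto t^{1/h}$, then bound $y\le A+B\log y$ by a case split on the size of $B$), and you yourself cite the same source for the constants, so your approach is entirely consistent with---indeed more informative than---what the paper does.
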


\section{A first upper bound}\label{sec:bound}

As already mentioned in the previous section we consider the case that $|S|=3$, i.e. we have $S=\{p,q,r\}$ with $P=\max\{p,q,r\}$ and write
\begin{align*}
 ab+1=s_1 &=p^{\alpha_1}q^{\beta_1}r^{\gamma_1} &
 bc+1=s_4 &=p^{\alpha_4}q^{\beta_4}r^{\gamma_4} \\
 ac+1=s_2 &=p^{\alpha_2}q^{\beta_2}r^{\gamma_2} &
 bd+1=s_5 &=p^{\alpha_5}q^{\beta_5}r^{\gamma_5} \\
 ad+1=s_3 &=p^{\alpha_3}q^{\beta_3}r^{\gamma_3} &
 cd+1=s_6 &=p^{\alpha_6}q^{\beta_6}r^{\gamma_6}.
\end{align*}
Let us write $A=\max_{i=1,\dots,6}\{\alpha_i\}, B=\max_{i=1,\dots,6}\{\beta_i\}$ and $C=\max_{i=1,\dots,6}\{\gamma_i\}$ and $M=\max\{A,B,C\}$.

The main result of this section is the following proposition:

\begin{proposition}\label{prop:first-upper-bound}
 If there exists an $S$-Diophantine quadruple, then 
 $$M<  3.62 \cdot 10^{23} (\log P)^5 \log( 3.62\cdot 10^{23} (\log P)^5)^2.$$
 In the case that $S=\{2,3,5\}$ we have the upper bound
 $$M<1.26\cdot 10^{28}$$
 and in the case that $S=\{p,q,r\}$ with $p,q,r<100$ we have the upper bound
 $$M<2.88 \cdot 10^{30}.$$
 
\end{proposition}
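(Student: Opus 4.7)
The plan is to combine lower bounds for linear forms in complex logarithms (Matveev, Lemma \ref{lem:Matveev}) with lower bounds for $p$-adic linear forms (Bugeaud--Laurent, Lemma \ref{lem:Bugeaud}), applied to well-chosen $S$-unit quotients coming from the identities \eqref{eq:Sunit-sys-gen}, in order to derive an inequality of shape $M\leq C(\log P)^5(\log M)^2$ that is then resolved by Peth\H{o}--de Weger's trick (Lemma \ref{lem:pdw}).

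On the complex side, I would begin with the identity $(s_1-1)(s_6-1)=(s_2-1)(s_5-1)=abcd$, which rewrites as $s_1 s_6 - s_2 s_5 = (d-a)(c-b)$. Dividing by $s_2 s_5$ gives
$$0<\Lambda:=\frac{s_1 s_6}{s_2 s_5}-1=\frac{(d-a)(c-b)}{s_2 s_5}<\frac{1}{s_1-1},$$
where the upper bound follows from $(d-a)(c-b)<cd=s_6-1$ together with $s_2 s_5 > s_6(s_1-1)$. Its logarithm $\Lambda'$ is a three-term form in $\log p,\log q,\log r$ with integer coefficients of absolute value at most $2M$ and $|\Lambda'|<2/(s_1-1)$. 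Matveev's theorem with $n=3$, $D=1$, $\kappa=1$, $A_i=\log\eta_i\leq\log P$, $\Omega\leq(\log P)^3$ and $E\leq 2M$ then yields an explicit $C_1$ with $\log(s_1-1)\leq C_1(\log P)^3\log(eM)$, and analogous reasoning on the other two identities in \eqref{eq:Sunit-sys-gen} produces the same type of bound on $\log(s_2-1)$ and $\log(s_3-1)$.

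To control the larger $S$-units $s_4,s_5,s_6$ I would turn to the $p$-adic side. Fix the prime $p$; by Lemma \ref{lem:min_alpha} two of $\{\alpha_1,\alpha_2,\alpha_5,\alpha_6\}$ coincide at the common minimum, and after a case split on which pair this is, one may factor the corresponding power of $p$ out of $s_1 s_6/(s_2 s_5)$ so that the residual ratio becomes a two-variable form $q^Y r^Z-p^X$ with $\nu_p(q)=\nu_p(r)=0$, to which Lemma \ref{lem:Bugeaud} applies with $h'(q),h'(r)\leq\log P$ and $E\ll\log M$, yielding
$$\nu_p(q^Y r^Z-p^X)\leq\frac{24\,p}{(\log p)^4}E^2\log q\log r\ \ll\ \frac{p}{(\log p)^4}(\log P)^2(\log M)^2.$$
On the other hand $\nu_p(s_1 s_6 - s_2 s_5)=\nu_p((d-a)(c-b))\leq 2\log s_6/\log p$, which together with Lemma \ref{lem:min_alpha} bounds either $\alpha_6$ directly or (when the minimum is attained by a different pair) the difference $\alpha_6-\alpha_1$. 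Running the analogous $q$- and $r$-adic arguments controls $\beta_6$ and $\gamma_6$, and combining with the Matveev bound on $\log(s_1-1)$ yields
$$\log s_6=\alpha_6\log p+\beta_6\log q+\gamma_6\log r\leq C_2(\log P)^5(\log M)^2,$$
where the power $(\log P)^5$ arises as $(\log P)^3$ from the Matveev step times $(\log P)^2$ from the $h'(q)h'(r)$ contribution in Bugeaud--Laurent, with the factors $p/(\log p)^3$ absorbed for the primes in the range considered.

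Since $M$ is by definition the largest exponent among the $\alpha_i,\beta_i,\gamma_i$, we have $M\log 2\leq M\log p\leq\log s_6$, whence $M\leq C_3(\log P)^5(\log M)^2$ with $C_3$ explicit. Lemma \ref{lem:pdw} with $h=2$, $u=0$ and $v=C_3(\log P)^5$ then gives $M<3.62\cdot10^{23}(\log P)^5\log(3.62\cdot10^{23}(\log P)^5)^2$; inserting the actual values of $\log p,\log q,\log r$ (rather than the worst-case $\log P$) into the constants emerging from Matveev and Bugeaud--Laurent refines $C_3$ for $S=\{2,3,5\}$ and for $p,q,r\leq100$ to give the two numerical corollaries. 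The main obstacle I anticipate is the case analysis driven by Lemma \ref{lem:min_alpha}: for each of the three primes $\pi\in\{p,q,r\}$ and each of the three identities in \eqref{eq:Sunit-sys-gen} one has to determine which two of the four relevant exponents realise the common minimum in order to factor out the correct power of $\pi$ before invoking Lemma \ref{lem:Bugeaud}, and the explicit constants coming from Matveev's $C(n,\kappa)C_0W_0$ expression and Bugeaud--Laurent's $E$ parameter must be tracked carefully to reach the precise numerical value $3.62\cdot10^{23}$.
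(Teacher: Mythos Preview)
Your complex-logarithm step matches the paper's Lemma~\ref{lem:bound124} (Matveev with $n=3$ applied to the ratio $s_1s_6/(s_2s_5)$ to bound $\log s_1$ and $\log s_2$), but your claim that ``analogous reasoning on the other two identities produces the same type of bound on $\log(s_3-1)$'' is false: each of the three ratios $T$ built from \eqref{eq:Sunit-sys-gen} satisfies $|T-1|<1/s_i$ only for $i\in\{1,2\}$, never for $i=3$, because the numerator $(d-a)(c-b)$ or $(b-a)(d-c)$ is always comparable to $s_6$ or $s_5$, not to $s_3$. More seriously, your $p$-adic step has a structural gap. You assert that Bugeaud--Laurent applied to (some rearrangement of) $s_1s_6/(s_2s_5)-1$ ``bounds either $\alpha_6$ directly or the difference $\alpha_6-\alpha_1$'', but no such inequality is available: the $p$-adic valuation of $s_1s_6-s_2s_5$ is $\geq\min(\alpha_1+\alpha_6,\alpha_2+\alpha_5)$, and after Lemma~\ref{lem:min_alpha} this minimum is governed by the \emph{small} exponents, not by $\alpha_6$. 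Bugeaud--Laurent gives an \emph{upper} bound on a valuation; to bound $\alpha_6$ you would need $\alpha_6$ to appear as a \emph{lower} bound on the valuation of some expression, and the ratio you work with does not provide this. (The paper does eventually set up such an inequality in Proposition~\ref{lem:bound-absolute}, but using the rearranged identity $\frac{s_z(s_y-1)}{s_y-s_x}-1=\frac{s_{\sigma(6)}(s_x-1)}{s_y-s_x}$, whose right-hand side visibly has $\nu_p=\alpha_{\sigma(6)}-\alpha_y$; this is used for the reduction step, not for the initial bound.) Finally, even if the $p$-adic route were repaired, Lemma~\ref{lem:Bugeaud} carries an explicit factor $24p/(\log p)^4$, so the resulting bound on $M$ would be polynomial in $P$ rather than in $\log P$, and could not yield the stated general inequality $M<3.62\cdot 10^{23}(\log P)^5\log(3.62\cdot 10^{23}(\log P)^5)^2$.

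The paper avoids all of this by a second application of Matveev, this time with $n=4$: from $\frac{b}{a}\cdot\frac{s_3}{s_5}-1<\frac{1}{d}$ one gets a four-term linear form $\log(b/a)+A'\log p+B'\log q+C'\log r$, where the height of $b/a$ is bounded by the $M_0$ already obtained, and Matveev then gives $\log d\ll M_0(\log P)^3\log M$. Since $d^2>s_6\geq p^M$ (or $q^M$ or $r^M$), this yields $M\ll M_0(\log P)^2\log M\ll(\log P)^5(\log M)^2$, and Lemma~\ref{lem:pdw} finishes. The $(\log P)^5$ exponent thus comes from two Matveev applications ($(\log P)^3$ and $M_0\cdot(\log P)^3/\log p$), not from a Matveev--Bugeaud--Laurent combination.
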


First, we prove an upper bound for $\alpha_i,\beta_i$ and $\gamma_i$ with $i=1,2,4$.

\begin{lemma}\label{lem:bound124}
 We have 
 $$\log s_1,\log s_2\leq 2.38 \cdot 10^{10} \log p \log q \log r \log (2M)$$
 and
 $$\log s_4\leq 4.76 \cdot 10^{10} \log p \log q \log r \log (2M).$$
 In particular, we have
 \begin{align*}
  \alpha_1,\alpha_2 & \leq 2.38 \cdot 10^{10} \log q \log r \log (2M),\\
  \beta_1,\beta_2 & \leq 2.38 \cdot 10^{10} \log p \log r \log (2M),\\
  \gamma_1,\gamma_2& \leq 2.38 \cdot 10^{10} \log p \log q \log (2M),
  \end{align*}
  and 
  \begin{align*}
  \alpha_4 & \leq 4.76 \cdot 10^{10} \log q \log r \log (2M),\\
  \beta_4 & \leq 4.76 \cdot 10^{10} \log p \log r \log (2M),\\
  \gamma_4 & \leq 4.76 \cdot 10^{10} \log p \log q \log (2M).
  \end{align*}
\end{lemma}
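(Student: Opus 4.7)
The plan is to derive two linear forms in $\log p, \log q, \log r$ from the $S$-unit system \eqref{eq:Sunit-sys-gen}, bound them from above by elementary manipulations, and apply Matveev's theorem (Lemma \ref{lem:Matveev}) to obtain matching lower bounds. This handles $s_1,s_2$ directly; the bound on $s_4$ will then follow from the triple identity $(s_1-1)(s_2-1)=a^2(s_4-1)$, which accounts for the factor of two in its coefficient.

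A direct expansion using $s_i-1=a_ia_j$ gives the two identities
\[
s_1 s_6 - s_2 s_5 = (c-b)(d-a), \qquad s_2 s_5 - s_3 s_4 = (b-a)(d-c),
\]
both strictly positive under the ordering $a<b<c<d$. Bounding the right-hand sides trivially by $cd=s_6-1$ and $bd=s_5-1$, and dividing by $s_1s_6$ and $s_2s_5$ respectively, yields
\[
0<1-\frac{s_2 s_5}{s_1 s_6}<\frac{1}{s_1}, \qquad 0<1-\frac{s_3 s_4}{s_2 s_5}<\frac{1}{s_2}.
\]
Taking logarithms and using $|\log(1-x)|\leq 2x$ for $0<x\leq 1/2$, the quantities
\[
\Lambda_1:=\log\frac{s_1 s_6}{s_2 s_5}, \qquad \Lambda_2:=\log\frac{s_2 s_5}{s_3 s_4}
\]
satisfy $0<\Lambda_j\leq 2/s_j$. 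Each $\Lambda_j$ is a nonzero integer linear combination of $\log p,\log q,\log r$ with coefficients of absolute value at most $2M$; nonvanishing is forced by the strict inequalities $s_1s_6>s_2s_5>s_3s_4$.

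Next I would invoke Matveev's theorem with $n=3$, $D=1$, $\kappa=1$, $A_i=\log p_i$ (noting $h(p_i)=\log p_i$) and $E\leq 2M$, permuting the variables if necessary so that the last coefficient is nonzero. Using the explicit values of $C(3,1)$ and $C_0$ together with $W_0\leq\log(3eM)$ gives a lower bound of shape
\[
|\Lambda_j|\geq \exp\bigl(-2.38\cdot 10^{10}\,\log p\,\log q\,\log r\,\log(2M)\bigr).
\]
Combined with $|\Lambda_j|\leq 2/s_j$ this delivers $\log s_j\leq 2.38\cdot 10^{10}\log p\log q\log r\log(2M)$ for $j=1,2$, and the individual bounds on $\alpha_j,\beta_j,\gamma_j$ follow from $\alpha_j\log p\leq\log s_j$ and analogous inequalities. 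For $s_4$, the triple identity and $a\geq 1$ give $s_4-1\leq (s_1-1)(s_2-1)\leq s_1 s_2$, hence $\log s_4\leq \log s_1+\log s_2$, which doubles the constant to $4.76\cdot 10^{10}$.

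The main obstacle is purely numerical: one must bookkeep Matveev's constants carefully so as to absorb both the $\log 2$ arising from $|\Lambda_j|\leq 2/s_j$ and the slack between $\log(3eM)$ and $\log(2M)$ into the claimed coefficient $2.38\cdot 10^{10}$. This is handled either by invoking a mild preliminary lower bound on $M$ or by choosing $W_0$ somewhat more tightly in a case distinction. Once this routine bookkeeping is complete, the only remaining conceptual point is verifying nondegeneracy of $\Lambda_j$, which is immediate from the strict ordering of the $s_i$.
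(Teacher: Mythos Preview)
Your proposal is correct and follows essentially the same approach as the paper. Your linear forms $\Lambda_1=\log(s_1s_6/s_2s_5)$ and $\Lambda_2=\log(s_2s_5/s_3s_4)$ are exactly the logarithms of the paper's quantities $T_1$ and $T_2$; where the paper writes $T_1=1+\theta/(ab+1)$ with $|\theta|<1$, you equivalently use the explicit identity $s_1s_6-s_2s_5=(c-b)(d-a)$ to reach the same bound $|\Lambda_j|<2/s_j$, and the $s_4$ estimate via $(s_1-1)(s_2-1)=a^2(s_4-1)$ is the same as the paper's $s_4<s_1s_2$.
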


\begin{proof}
 Following Stewart and Tijdeman \cite{Stewart:1997} we consider the quantity
 \begin{equation}\label{eq:T1}
  T_1=p^{A'}q^{B'}r^{C'}=\frac{(ab+1)(cd+1)}{(ac+1)(bd+1)}=1+\frac{cd+ab-ac-bd}{(ac+1)(bd+1)}=1+\frac{\theta}{ab+1}
 \end{equation}
with $|\theta|< 1$, $A'=\alpha_1+\alpha_6-\alpha_2-\alpha_5$, $B'=\beta_1+\beta_6-\beta_2-\beta_5$ and $C'=\gamma_1+\gamma_6-\gamma_2-\gamma_5$. Note that we have $|A'|\leq 2A$, $|B'|\leq 2B$ and $|C'|\leq 2C$. Let us note that $\log(1+x)\leq 2x$ provided that $|x|\leq 1/2$. Since $ab+1\geq 2$ we obtain by taking logarithms
\begin{equation}\label{eq:Lamb1}
 |\Lambda_1|=|A' \log p+B'\log q+C'\log r|<\frac{2}{p^{\alpha_1}q^{\beta_1}r^{\gamma_1}}=\frac{2}{s_1}
\end{equation}
We apply Matveev's theorem (Lemma \ref{lem:Matveev}) with
\begin{gather*}
 D=1, \quad n=3, \quad \kappa=1, \quad b_1=A',\quad b_2=B',\quad b_3=C', \\
 \eta_1=p, \quad \eta_2=q, \quad \eta_3=r, \quad A_1=\log p, \quad A_2=\log q, \quad A_3=\log r.
\end{gather*}
Therefore we have $E\leq \max\{A',B',C'\}\leq 2M$ provided $M>2$ which we clearly may assume.
Thus we obtain 
$$\log s_1 -\log 2 < 2.375 \cdot 10^{10} \log p \log q \log r \log (2M),$$
which yields the upper bound for $\log s_1$. Since
$$\log s_1=\alpha_1\log p+\beta_1 \log q+\gamma_1\log r $$ 
this also yields the upper bounds for $\alpha_1,\beta_1$ and $\gamma_1$.

If we consider instead of $T_1$ the quantity
\begin{equation}\label{eq:T2}
  T_2=p^{A''}q^{B''}r^{C''}=\frac{(ac+1)(bd+1)}{(bc+1)(ad+1)}=1+\frac{bd+ac-bc-ad}{(bc+1)(ad+1)}=1+\frac{\theta}{ac+1}
 \end{equation}
with $|\theta|< 1$, $A''=\alpha_2+\alpha_5-\alpha_3-\alpha_4$, $B''=\beta_2+\beta_5-\beta_3-\beta_4$ and $C''=\gamma_2+\gamma_5-\gamma_3-\gamma_4$. We end up with the linear form
\begin{equation}\label{eq:Lamb2}
 |\Lambda_2|=|A'' \log p+B''\log q+C''\log r|<\frac{2}{p^{\alpha_2}q^{\beta_2}r^{\gamma_2}}
\end{equation}
and again an application of Matveev's result yields the same upper bound for $\log s_2$ and also the same upper bounds for $\alpha_2,\beta_2$ and $\gamma_2$.

Finally let us note that 
$$s_4= bc+1<(ab+1)(ac+1)=s_1s_2, $$
which yields after some easy computations the stated upper bounds for $\log s_4, \alpha_4,\beta_4$ and $\gamma_4$.
\end{proof}

Let us denote by $M_0$ and $M_4$ upper bounds for $\max\{\log s_1,\log s_2\}$ and $\log s_4$ respectively. Then the previous lemma provides upper bounds for $M_0$ and $M_4$.

\begin{lemma}\label{lem:bound-absolute-easy}
 We have $M\leq  3.79 \cdot 10^{12} M_0 (\log P)^2 \log M$.
\end{lemma}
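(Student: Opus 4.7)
The plan is to reduce the bound on $M$ to an upper bound on $\log s_5$ (and analogously on $\log s_3$) via a four-term linear form in logarithms, and then divide by the appropriate $\log\pi$ to absorb one factor of $\log P$. First note that by Lemma \ref{lem:bound124}, if $M$ is realised as an exponent of $s_1$, $s_2$ or $s_4$ then the claim is immediate: the bound $4.76\cdot 10^{10}(\log P)^2\log(2M)$ is far smaller than the right-hand side. Hence we may assume that $M$ arises from $s_3$, $s_5$ or $s_6$.

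The identities $s_5 = bd+1$ and $s_6 = cd+1$ yield $c s_5 - b s_6 = c - b$, whence
\begin{equation*}
\left|\frac{c s_5}{b s_6} - 1\right| = \frac{c-b}{b s_6} < \frac{1}{s_5-1},
\end{equation*}
and taking logarithms produces the linear form
\begin{equation*}
\Lambda = \log(c/b) + (\alpha_5-\alpha_6)\log p + (\beta_5-\beta_6)\log q + (\gamma_5-\gamma_6)\log r
\end{equation*}
with $|\Lambda| < 4/s_5$. Because $s_4=bc+1$ forces $\gcd(b,c)=1$, and $c < s_2$, we have $h(c/b) = \log c < M_0$. I would then apply Matveev's theorem (Lemma \ref{lem:Matveev}) with $n=4$, $D=1$, $\eta_1 = c/b$, $\eta_2 = p$, $\eta_3 = q$, $\eta_4 = r$, and $|b_j|\leq 2M$ to obtain
\begin{equation*}
\log s_5 \leq C \cdot M_0\cdot\log p\,\log q\,\log r\cdot\log(2M),
\end{equation*}
for an explicit Matveev constant $C$ of order $C(4)C_0 \approx 1.35\cdot 10^{12}$.

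Suppose $M$ is an exponent of $s_5$ corresponding to the prime $\pi\in\{p,q,r\}$; then $M\log\pi \leq \log s_5$, and dividing the above Matveev bound by $\log\pi$ replaces $\log p\,\log q\,\log r$ by the product of the two remaining prime-logs, which is at most $(\log P)^2$. This yields $M \leq C\,M_0(\log P)^2\log(2M)$. The cases when $M$ arises from $s_3$ or $s_6$ are handled analogously: for $s_3$ one uses the identity $as_5-bs_3 = a-b$ (with $\gcd(a,b)=1$ from $s_1 = ab+1$ and $\log b\leq M_4/2\leq M_0$); for $s_6$ the elementary inequality $\log s_6 \leq \log s_5 + \log(c/b) + O(1/s_5) \leq \log s_5 + M_0$ reduces to the bound already obtained. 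Combining the cases, replacing $\log(2M)$ by $\log M$ at the cost of absorbing a multiplicative constant, and tracking the Matveev constants carefully yields $M \leq 3.79\cdot 10^{12} M_0(\log P)^2\log M$. The main obstacle will be the precise tracking of $C(4)$, $C_0$, the $W_0$ and $E$ parameters in Matveev's theorem, together with handling the degenerate case in which $c/b$ or $a/b$ happens to be an $S$-unit: then $\log(c/b)$ is a $\Z$-linear combination of $\log p,\log q,\log r$, the form $\Lambda$ collapses to three terms, and Matveev must be applied with $n=3$, which gives a strictly sharper bound.
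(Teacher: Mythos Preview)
Your approach is correct and uses the same core idea as the paper (a Stewart--Tijdeman identity combined with Matveev for $n=4$), but the execution differs. The paper works with the single linear form
\[
\Lambda_3=\log\tfrac{b}{a}+(\alpha_3-\alpha_5)\log p+(\beta_3-\beta_5)\log q+(\gamma_3-\gamma_5)\log r,
\]
obtained from $\frac{b}{a}\cdot\frac{s_3}{s_5}-1=\frac{b-a}{a(bd+1)}\le \frac{1}{d}$, and bounds $\log d$ directly. It then invokes the elementary observation $d^2>cd+1=s_6\ge \exp(M\log p_{\min})$, which dispatches all of $s_3,s_5,s_6$ simultaneously without any case analysis; the factor of $2$ from $d^2$ is precisely what turns $1.893\cdot 10^{12}$ into $3.79\cdot 10^{12}$. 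Your route instead splits into the three cases $s_3,s_5,s_6$, building a separate linear form for each (or reducing $s_6$ to $s_5$ by $s_6\le \tfrac{c}{b}s_5$), and then divides by the specific $\log\pi$. This is slightly less economical but gives the same bound, and in fact your division by $\log\pi$ rather than $\log p_{\min}$ could in principle shave the constant.

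One small correction: the claim that $s_4=bc+1$ forces $\gcd(b,c)=1$ is false (e.g.\ $b=2$, $c=4$, $s_4=9$). This is harmless, since $h(c/b)\le \log c$ holds regardless once the fraction is reduced, and $c<s_2\le e^{M_0}$ still gives $h(c/b)\le M_0$. Your handling of the degenerate case where $b/a$ or $c/b$ is an $S$-unit is more explicit than the paper's, which silently assumes the $n=4$ hypotheses of Matveev are met.
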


\begin{proof}
Following again the arguments of Stewart and Tijdeman \cite{Stewart:1997} we consider the following inequality
 $$ 0\leq \frac ba p^{\alpha_3-\alpha_5}q^{\beta_3-\beta_5}r^{\gamma_3-\gamma_5}-1=\frac ba \cdot \frac{ad+1}{bd+1}-1=\frac{b-a}{a(bd+1)}\leq \frac 1{d}$$
 which implies
 \begin{equation}\label{eq:Lamb3}
 \Lambda_3:=\left|\log \frac ba+A' \log p + B'\log q +C' \log r\right|\leq \frac 2d,
 \end{equation}
 where $A'=\alpha_3-\alpha_5$, $B'=\beta_3-\beta_5$ and $C'=\gamma_3-\gamma_5$.
 We apply Matveev's result with
 \begin{gather*}
 D=1, \quad n=4, \quad \kappa=1,\quad b_1=1, \quad b_2=A',\quad b_3=B',\quad b_4=C', \\
 \eta_1=\frac ab, \quad \eta_1=p, \quad \eta_2=q, \quad \eta_3=r,\\
 A_1=\log \left(\max\{a,b\}\right), \quad A_2=\log p, \quad A_3=\log q, \quad A_4=\log r.
\end{gather*}
Since 
$$b<ab+1=\exp(\alpha_1 \log p+\beta_1 \log q+\gamma_1\log r)\leq \exp(M_0)$$
and $\max\{A',B',C'\}\leq M$ we get
$$
 1.893\cdot 10^{12} M_0 \log p \log q \log r \log M >\log d-\log 2.
$$
On the other hand we have
\begin{align*}
d^2>dc+1&\geq \exp\left(\max\{A\log p, B \log q, C \log r\}\right)\\
& \geq \exp(M\min\{\log p,\log q,\log r\}).
\end{align*}
Combining these two inequalities yields the content of the lemma.
\end{proof}

Now the proof of Proposition \ref{prop:first-upper-bound} is a combination of Lemmas \ref{lem:bound124} and \ref{lem:bound-absolute-easy}. 

\begin{proof}[Proof of Proposition \ref{prop:first-upper-bound}]
By inserting the bound for $M_0$ obtained by Lemma \ref{lem:bound124} into the bound provided by Lemma \ref{lem:bound-absolute-easy} we obtain the following inequality
$$M< 9.03 \cdot 10^{22} (\log P)^5 (\log 2M)^2.$$
An application of Lemma \ref{lem:pdw} yields
$$M< 3.62 \cdot 10^{23} (\log P)^5 \log( 3.62\cdot 10^{23} (\log P)^5)^2.$$

If we put $P=5$ and $P=100$ we obtain the other bounds stated in the proposition.
\end{proof}

Although we have an upper bound for $M$ in Proposition \ref{prop:first-upper-bound} this upper bound provides only rather huge bounds for all the exponents. Moreover, the linear form of logarithms \eqref{eq:Lamb3} is not suitable for applying standard reduction schemes like LLL-reduction. Therefore we give a more detailed account on how to find smaller upper bounds for all the exponents.

First, we will prove that all exponents with one possible exception can be bounded in terms of $M_0$ and $M_4$. In particular, Proposition \ref{prop:one-is-large} will be useful in the bound reduction process which we will describe in Section \ref{sec:reduction}. Before we can state and prove Proposition \ref{prop:one-is-large} we prove another helpful lemma:

\begin{lemma}\label{lem:cases}
  There exists a permutation $\sigma$ of $\{1,\dots,6\}$ such that 
  $$\alpha_{\sigma(1)}\leq \alpha_{\sigma(2)}\leq \alpha_{\sigma(3)}\leq \alpha_{\sigma(4)}\leq \alpha_{\sigma(5)}\leq \alpha_{\sigma(6)}.$$
  and such that one of the following two relations holds:
 \begin{itemize}
  \item $\alpha_{\sigma(1)}=\alpha_{\sigma(2)}=\alpha_{\sigma(3)}\leq\alpha_{\sigma(4)} \leq \alpha_{\sigma(5)}\leq \alpha_{\sigma (6)}$ and no two indices out of $\sigma(1)$, $\sigma(2)$ and $\sigma(3)$ are complementary,
  \item $\alpha_{\sigma(1)}=\alpha_{\sigma(2)}\leq \alpha_{\sigma(3)}=\alpha_{\sigma(4)} \leq \alpha_{\sigma(5)}\leq \alpha_{\sigma (6)}$ and $\sigma(1)$ and $\sigma(2)$ are complementary.
 \end{itemize}
\end{lemma}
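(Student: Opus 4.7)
The plan is to analyze the global minimum $m = \min_{1 \leq i \leq 6}\alpha_i$ according to how it is distributed across the three complementary pairs $P_1 = \{1,6\}$, $P_2 = \{2,5\}$, $P_3 = \{3,4\}$. The key observation is an immediate consequence of Lemma \ref{lem:min_alpha}: if $m$ occurs in the pair $P_i$ while no element of $P_j$ equals $m$ (for some $j \neq i$), then both elements of $P_i$ must equal $m$. Indeed, Lemma \ref{lem:min_alpha} applied to the four-element set $P_i \cup P_j$ says that its two smallest members coincide; since the minimum value $m$ does not appear in $P_j$, both copies of the smallest value must lie in $P_i$.

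If $m$ occurs in all three pairs, I pick $\sigma(1), \sigma(2), \sigma(3)$ to be one index of value $m$ from each of $P_1, P_2, P_3$; these three indices lie in pairwise distinct complementary pairs, so no two are complementary, and the first alternative is obtained after sorting the remaining three indices as $\sigma(4), \sigma(5), \sigma(6)$. If $m$ occurs in exactly two pairs, say $P_i$ and $P_j$, then applying the key observation both to $(P_i, P_k)$ and to $(P_j, P_k)$ (where $P_k$ denotes the third pair) forces all four elements of $P_i \cup P_j$ to equal $m$. Choosing $\sigma(1), \sigma(2)$ to be the two indices of $P_i$ (which form a complementary pair) and $\sigma(3), \sigma(4)$ to be the two indices of $P_j$, and sorting $P_k$ as $\sigma(5), \sigma(6)$, gives the second alternative.

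If $m$ occurs in exactly one pair $P_i$, the key observation again forces both elements of $P_i$ to equal $m$, while the four values in $P_j \cup P_k$ are all strictly greater than $m$. A further application of Lemma \ref{lem:min_alpha} to $P_j \cup P_k$ shows that its two smallest members are equal, and these must therefore be the third- and fourth-smallest values among $\alpha_1, \ldots, \alpha_6$. Taking $\sigma(1), \sigma(2)$ to be the two indices of $P_i$ (complementary, of value $m$), $\sigma(3), \sigma(4)$ to be those two smallest indices of $P_j \cup P_k$, and $\sigma(5), \sigma(6)$ the remaining two indices in non-decreasing order, yields the second alternative. The crux of the argument is really the key observation; once that is in hand the three cases are clearly exhaustive and the rest reduces to picking representatives and sorting, so I do not anticipate any substantive obstacle.
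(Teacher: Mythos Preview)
Your proof is correct. Both arguments rest on Lemma~\ref{lem:min_alpha}, but they organize the case analysis differently. The paper begins with an arbitrary sorting permutation, observes that $\alpha_{\sigma(1)}=\alpha_{\sigma(2)}$, and then branches on whether $\sigma(1)$ and $\sigma(2)$ are complementary; in the non-complementary branch it deduces $\alpha_{\sigma(1)}=\alpha_{\sigma(2)}=\alpha_{\sigma(3)}$ and then checks whether any two of $\sigma(1),\sigma(2),\sigma(3)$ are complementary, rearranging if so. You instead classify by how many of the three complementary pairs meet the global minimum, and your ``key observation'' (if the minimum appears in $P_i$ but not in $P_j$, then both members of $P_i$ attain the minimum) makes each of the three cases fall out immediately. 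Your decomposition is more symmetric with respect to the pair structure and avoids the rearrangement step in the paper's proof; the paper's argument is slightly more ad hoc but arrives at the same dichotomy with comparable effort.
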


\begin{proof}
Let $\sigma$ be any permutation of $\{1,\dots,6\}$ such that 
$$\alpha_{\sigma(1)}\leq \alpha_{\sigma(2)}\leq \alpha_{\sigma(3)}\leq \alpha_{\sigma(4)}\leq \alpha_{\sigma(5)}\leq \alpha_{\sigma(6)}.$$
Obviously such a permutation exists.

Since Lemma \ref{lem:min_alpha} we know that $\alpha_{\sigma(1)}=\alpha_{\sigma(2)}$.
 Assume that $\sigma(1)$ and $\sigma(2)$ are not complementary. Then there is exactly one $S$-unit equation out of the system \eqref{eq:Sunit-sys-gen} such that this unit equation contains the index $\sigma(1)$ but not $\sigma(2)$. Since $\alpha_{\sigma(1)}$ is minimal it is equal to one exponent of this unit equation, i.e. $\alpha_{\sigma(1)}=\alpha_{\sigma(2)}= \alpha_{\sigma(3)}$.
 
 If $\sigma(1)$ and $\sigma(3)$ are complementary then there is exactly one $S$-unit equation out of the system \eqref{eq:Sunit-sys-gen} that does not contain the indices $\sigma(1)$ and $\sigma(3)$ but contains the index $\sigma(2)$. But since $\alpha_{\sigma(2)}$ is minimal it is equal to $\alpha_{\sigma(4)}$ and we have $\alpha_{\sigma(1)}=\alpha_{\sigma(2)}=\alpha_{\sigma(3)}= \alpha_{\sigma(4)}$. By exchanging the values of $\sigma(2)$ and $\sigma(3)$ the numbers $\sigma(1)$ and $\sigma(2)$ are complementary and we have found a permutation that satisfies the second case described in the lemma. A similar argument holds if $\sigma(2)$ and $\sigma(3)$ are complementary. Thus we have proved: If $\sigma(1)$ and $\sigma(2)$ are not complementary, then either the first case holds or the second case holds after rearranging the order of the $\alpha$'s, i.e. we have found a suitable permutation $\sigma$.
 
 Therefore let us assume that $\sigma(1)$ and $\sigma(2)$ are complementary. Then there is a unique $S$-unit equation out of the system \eqref{eq:Sunit-sys-gen} such that this unit equation does not contain the indices $\sigma(1)$ and $\sigma(2)$. Thus by Lemma \ref{lem:min_alpha} we obtain that $\alpha_{\sigma(3)}= \alpha_{\sigma(4)}$ and we are in the second case of the lemma.
\end{proof}

Similar as in the case of the exponents of $p$ we let $\tau$ and $\rho$ be permutations of $\{1,\dots,6\}$ such that
$$\beta_{\tau(1)}\leq \beta_{\tau(2)}\leq \beta_{\tau(3)}\leq \beta_{\tau(4)}\leq \beta_{\tau(5)}\leq \beta_{\tau(6)} $$
and
$$ \gamma_{\rho(1)}\leq \gamma_{\rho(2)}\leq \gamma_{\rho(3)}\leq \gamma_{\rho(4)}\leq \gamma_{\rho(5)}\leq \gamma_{\rho(6)}$$
respectively. By exchanging the roles of the $\alpha$'s, $\beta$'s and $\gamma$'s we can show that permutations $\tau$ and $\rho$ with analogous properties as stated in Lemma \ref{lem:cases} exist. Let us fix the permutations $\sigma, \rho$ and $\tau$. We are now in a position to state the next proposition that will prove to be useful in reducing the huge upper bounds we get from Proposition \ref{prop:first-upper-bound}.

\begin{proposition}\label{prop:one-is-large}
 Assume that $\log s_1,\log s_2 \leq M_0$ and that $\log s_4\leq M_4$. Moreover, let
 \begin{gather*}
 B_p=\log p \max_{\substack{|\beta| \leq M_4/\log q \\ |\gamma| \leq M_4/\log r}}\left\{\nu_p(q^\beta r^\gamma-1)\right\}, \qquad
 B_q=\log q \max_{\substack{|\alpha| \leq M_4/\log p \\ |\gamma| \leq M_4/\log r}}\left\{ \nu_q(p^\alpha r^\gamma-1)\right\},\\
 B_r=\log r \max_{\substack{|\alpha| \leq M_4/\log p \\ |\beta| \leq M_4/\log q}}\left\{\nu_r(p^\alpha q^\beta-1)\right\}
 \end{gather*}
 and $\mathcal B=\max\{B_p,B_q,B_r\}$.
 Then we have
  \begin{gather*}
   \alpha_{\sigma(5)}\leq \frac{\max\{M_0+B_p,M_4\}}{\log p},\qquad \beta_{\tau(5)}\leq \frac{\max\{M_0+B_q,M_4\}}{\log q},\\
   \gamma_{\rho(5)}\leq \frac{\max\{M_0+B_r,M_4\}}{\log r}.
  \end{gather*}
  In particular, we have 
$$ \max\{\alpha_{\sigma(5)} \log p ,\beta_{\tau(5)} \log q ,\gamma_{\rho(5)} \log r\}  \leq \max\{M_0+\mathcal B,M_4\}:=M_5.$$
\end{proposition}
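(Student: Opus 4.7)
The proof of each of the three bounds on $\alpha_{\sigma(5)}$, $\beta_{\tau(5)}$, $\gamma_{\rho(5)}$ proceeds along the same lines, so I focus on the bound for $\alpha_{\sigma(5)}$; the other two are obtained by interchanging the roles of $p,q,r$. The first move is to dispose of the easy cases: since $\log s_1, \log s_2 \leq M_0$ and $\log s_4 \leq M_4$, we have $\alpha_1, \alpha_2 \leq M_0/\log p$ and $\alpha_4 \leq M_4/\log p$, so if either $\sigma(5)$ or $\sigma(6)$ lies in $\{1,2,4\}$ then $\alpha_{\sigma(5)} \leq M_4/\log p$ (using $\alpha_{\sigma(5)} \leq \alpha_{\sigma(6)}$ in the latter case). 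This reduces matters to the case $\sigma(5), \sigma(6) \in \{3,5,6\}$, and since no two indices in $\{3,5,6\}$ sum to $7$, the indices $\sigma(5)$ and $\sigma(6)$ lie in distinct complementary pairs.

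The heart of the argument uses the unique equation of \eqref{eq:Sunit-sys-gen} whose two complementary pairs are those of $\sigma(5)$ and $\sigma(6)$. Writing $i_5 = \sigma(5)$ and $i_6 = \sigma(6)$, this equation has index set $\{i_5, 7-i_5, i_6, 7-i_6\}$ with $\{7-i_5, 7-i_6\} \subseteq \{1,2,4\}$. Starting from $(s_{i_5}-1)(s_{7-i_5}-1) = (s_{i_6}-1)(s_{7-i_6}-1)$, expanding, and factoring $s_{i_5}s_{7-i_5} - s_{i_6}s_{7-i_6}$ as $s_{7-i_5}(s_{i_5}-s_{i_6}) + s_{i_6}(s_{7-i_5}-s_{7-i_6})$, a short manipulation yields the identity
$$(s_{7-i_5}-1)(s_{i_5}-s_{i_6}) = (s_{i_6}-1)(s_{7-i_6}-s_{7-i_5}),$$
which will be the engine of the proof.

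Next I take $\nu_p$ of this identity and use the standard observation that $\nu_p(s_k-1)$ equals $0$ whenever $\alpha_k \geq 1$, while otherwise $\nu_p(s_k-1) = \nu_p(q^{\beta_k} r^{\gamma_k}-1) \leq B_p/\log p$. In the generic case $\alpha_{i_6}, \alpha_{7-i_5} \geq 1$ the identity collapses to $\nu_p(s_{i_5}-s_{i_6}) = \nu_p(s_{7-i_6}-s_{7-i_5})$, and a short case analysis on whether $\alpha_{i_5} = \alpha_{i_6}$ and whether $\alpha_{7-i_5} = \alpha_{7-i_6}$---with Lemma \ref{lem:min_alpha} governing when the latter equality must occur---then yields the bound. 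Two structural observations keep this clean: first, $\{7-i_5, 7-i_6\}$ is one of $\{1,2\}, \{1,4\}, \{2,4\}$ and therefore always contains an index from $\{1,2\}$, so an equality $\alpha_{7-i_5}=\alpha_{7-i_6}$ forces both values to be at most $M_0/\log p$; second, $|\beta_{7-i_6}-\beta_{7-i_5}| \leq M_4/\log q$ and $|\gamma_{7-i_6}-\gamma_{7-i_5}| \leq M_4/\log r$, so every expression $q^\bullet r^\bullet - 1$ that arises has exponents within the range used to define $B_p$. Assembling the subcases gives $\alpha_{i_5} \leq \max\{M_0+B_p, M_4\}/\log p$. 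Running the analogous arguments for $\tau$ and $\rho$ produces the companion bounds, and the concluding ``In particular'' inequality follows by multiplying the three bounds by $\log p$, $\log q$, $\log r$, taking maxima, and using $B_p, B_q, B_r \leq \mathcal{B}$.

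The main obstacle is the bookkeeping in the case analysis: the various subcases of Lemma \ref{lem:min_alpha} must be matched against the derived identity, and the edge cases in which some $\alpha_k = 0$ introduce extra $\nu_p(s_k-1) \leq B_p/\log p$ contributions that have to be absorbed into the final estimate. The crucial structural input making this all fit is that both complementary indices $7-i_5, 7-i_6$ are forced into $\{1,2,4\}$, which keeps every exponent appearing in the $\nu_p(q^\bullet r^\bullet - 1)$ expressions within the admissible window $|\beta| \leq M_4/\log q$, $|\gamma| \leq M_4/\log r$ that defines $B_p$.
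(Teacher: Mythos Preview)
Your argument is correct, but it takes a genuinely different route from the paper's. The paper organises the entire proof around the structural dichotomy of Lemma~\ref{lem:cases}: it splits into the two cases there, and within each case further distinguishes subcases according to how many of $\sigma(1),\sigma(2),\sigma(3)$ (respectively $\sigma(3),\sigma(4)$) lie in $\{1,2,4\}$. In the hard subcases the paper divides the raw $S$-unit equation $s_{\sigma(6)}s_{\sigma(1)}-s_{\sigma(6)}-s_{\sigma(1)}=s_{\sigma(5)}s_{\sigma(2)}-s_{\sigma(5)}-s_{\sigma(2)}$ through by $p^{\alpha_{\sigma(1)}}$, using the equalities $\alpha_{\sigma(1)}=\alpha_{\sigma(2)}$ supplied by Lemma~\ref{lem:cases}, to obtain a congruence $q^{\beta_{\sigma(2)}}r^{\gamma_{\sigma(2)}}-q^{\beta_{\sigma(1)}}r^{\gamma_{\sigma(1)}}\equiv 0 \pmod{p^{\alpha_{\sigma(5)}-\alpha_{\sigma(1)}}}$, from which the bound follows.

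Your approach bypasses Lemma~\ref{lem:cases} entirely: after the quick reduction to $\sigma(5),\sigma(6)\in\{3,5,6\}$ you work with the factored identity $(s_{7-i_5}-1)(s_{i_5}-s_{i_6})=(s_{i_6}-1)(s_{7-i_6}-s_{7-i_5})$ and simply use $\alpha_{i_5}\le \nu_p(s_{i_5}-s_{i_6})$ together with a bound on $\nu_p(s_{7-i_6}-s_{7-i_5})$. This is shorter and cleaner---the only structural input you need is that the complements $7-i_5,7-i_6$ automatically land in $\{1,2,4\}$ and that every $2$-subset of $\{1,2,4\}$ meets $\{1,2\}$, which keeps all exponents in the window defining $B_p$. (In fact your mention of Lemma~\ref{lem:min_alpha} is not really needed: the inequality $\alpha_{i_5}\le\nu_p(s_{i_5}-s_{i_6})$ holds regardless of whether $\alpha_{i_5}=\alpha_{i_6}$, so the two subcases on $\alpha_{7-i_5}$ versus $\alpha_{7-i_6}$ already suffice.) What the paper's longer route buys is that the equalities $\alpha_{\sigma(1)}=\alpha_{\sigma(2)}$ from Lemma~\ref{lem:cases} are made explicit, and this same case structure is reused in the proof of Proposition~\ref{lem:bound-absolute}.
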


\begin{proof}
 We give only the details for the proof of the upper bound for $\alpha_{\sigma(5)}$ since the upper bounds for $\beta_{\tau(5)}$ and $\gamma_{\rho(5)}$ can be deduced by the same argument.

First, let us assume that we are in the first case of Lemma \ref{lem:cases}. Let us assume for the moment that $|\{1,2,4\}\cap \{\sigma(1),\sigma(2),\sigma(3)\}|\leq 1$. In this case we conclude that $|\{1,2,4\}\cap \{\sigma(4),\sigma(5),\sigma(6)\}|\geq 2$ and therefore $\sigma(5)\in \{1,2,4\}$ or $\sigma(6)\in \{1,2,4\}$. Thus in any case $\alpha_{\sigma(5)} \leq M_4/\log p$.
 
 Now, let us assume that we are again in the first case of Lemma \ref{lem:cases} and that $|\{1,2,4\}\cap \{\sigma(1),\sigma(2),\sigma(3)\}|= 2$. We may assume that $\sigma(3) \not\in \{1,2,4\}$. If either $\sigma(5)$ or $\sigma(6)$ are contained in $\{1,2,4\}$, then we easily conclude that $\alpha_{\sigma(5)} \leq M_4/\log p$. Thus we may assume that $\sigma(4)\in \{1,2,4\}$. Further, we deduce that $\sigma(3)$ and $\sigma(4)$ are complementary, and without loss of generality we may assume that $\sigma(1)$ and $\sigma(6)$ as well as $\sigma(2)$ and $\sigma(5)$ are complementary. Let us consider the unit equation
 $$s_{\sigma(6)}s_{\sigma(1)} - s_{\sigma(6)} -s_{\sigma(1)}=s_{\sigma(5)}s_{\sigma(2)} - s_{\sigma(5)} -s_{\sigma(2)}. $$
 Dividing through $p^{\alpha_{\sigma(1)}}$ and collecting the terms not divisible by $p$ on one side of the equation we obtain
 $$q^{\beta_{\sigma(2)}}r^{\gamma_{\sigma(2)}}-q^{\beta_{\sigma(1)}}r^{\gamma_{\sigma(1)}}=
 p^{\alpha_{\sigma(5)}-\alpha_{\sigma(1)}} \times \mathcal N$$
 where $\mathcal N$ is an integer. Thus we deduce that 
 $$\frac{\mathcal B}{\log p}\geq \frac{B_p}{\log p}\geq \nu_p\left(q^{\beta_{\sigma(2)}-\beta_{\sigma(1)}}r^{\gamma_{\sigma(2)}-\gamma_{\sigma(1)}}-1 \right)\geq \alpha_{\sigma(5)}-\alpha_{\sigma(1)}\geq \alpha_{\sigma(5)}- \frac{M_0}{\log p} $$
 which implies the upper bound for $\alpha_{\sigma(5)}$. Note that by our choice that $\sigma(1),\sigma(2)\in \{1,2,4\}$ the exponents satisfy
 $|\beta_{\sigma(2)}-\beta_{\sigma(1)}|\leq \beta_4$ and $|\gamma_{\sigma(2)}-\gamma_{\sigma(1)}|\leq \gamma_4$.
 
 Assume again that we are in the first case of Lemma \ref{lem:cases} and that $\{1,2,4\}= \{\sigma(1),\sigma(2),\sigma(3)\}$. In this case we can choose $\sigma$ such that $\sigma(1)$ and $\sigma(6)$ as well as $\sigma(2)$ and $\sigma(5)$ are complementary. Now the same line of arguments as in the previous paragraph yields the same upper bound for  $\alpha_{\sigma(5)}$.
 
 Now, we assume that we are in the second case of Lemma \ref{lem:cases}. Since $\sigma(1)$ and $\sigma(2)$ are complementary exactly one of the two is an element of $\{1,2,4\}$. Assume now that $|\{\sigma(3),\sigma(4)\}\cap\{1,2,4\}|\leq 1$, then either $\sigma(5)$ or $\sigma(6)$ is contained in $\{1,2,4\}$ and we deduce in any case that $\alpha_{\sigma(5)}\leq M_4/\log p$. Therefore we may assume that $\sigma(3),\sigma(4)\in \{1,2,4\}$. However $\sigma(1)$ and $\sigma(2)$ are complementary and since $\sigma(3)$ and $\sigma(4)$ are not we can choose $\sigma$ such that $\sigma(3)$ and $\sigma(5)$ as well as $\sigma(4)$ and $\sigma(6)$ are complementary. By the same reasoning as in the proof of the first case we deduce that
 $$\frac{\mathcal B}{\log p}\geq \frac{B_p}{\log p}\geq \nu_p\left(q^{\beta_{\sigma(4)}-\beta_{\sigma(3)}}r^{\gamma_{\sigma(4)}-\gamma_{\sigma(3)}}-1 \right)\geq \alpha_{\sigma(5)}-\alpha_{\sigma(3)}.$$
 Since we are in the second case of Lemma \ref{lem:cases} we know that $\alpha_{\sigma(1)}$ and $\alpha_{\sigma(2)}$ are complementary and therefore 
 $$\{\sigma(3),\sigma(4),\sigma(5),\sigma(6)\} \cap \{1,2\}\neq \emptyset$$
 which implies that $\alpha_{\sigma(3)}\leq M_0/\log p$ and we obtain the upper bound for $\alpha_{\sigma(5)}$ also in the second case of the proposition.
\end{proof}

 The next lemma shows that if $M$ is large, then the indices of the large exponents are of a special form.
 
 \begin{lemma}\label{lem:diff-big}
  Let us assume that  $\alpha_{\sigma(5)}\leq M_5/\log p,\beta_{\tau(5)}\leq M_5/\log q$ and $\gamma_{\rho(5)}\leq M_5/\log r$. Then either $M\leq \frac{M_0+3M_5}{\log(\min\{p,q,r\})}$ or
  $\{\sigma(6),\tau(6),\rho(6)\}=\{3,5,6\}$.
 \end{lemma}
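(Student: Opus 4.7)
\emph{Proof plan.} I will argue by contrapositive: assume $\{\sigma(6),\tau(6),\rho(6)\}\neq \{3,5,6\}$ and derive the bound on $M$. The starting observation is a pigeonhole argument: under this assumption, there must exist an index $j\in\{3,5,6\}$ with $j\notin\{\sigma(6),\tau(6),\rho(6)\}$. For any such $j$ the hypothesis of the lemma gives $\alpha_j\leq \alpha_{\sigma(5)}\leq M_5/\log p$ together with the analogous bounds on $\beta_j$ and $\gamma_j$, so that
\[\log s_j=\alpha_j\log p+\beta_j\log q+\gamma_j\log r\leq 3M_5.\]

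Without loss of generality I then assume $M=A=\alpha_{\sigma(6)}$; the cases $M=B$ and $M=C$ are symmetric after permuting the roles of the three primes and the permutations $\sigma,\tau,\rho$. Then $M\log(\min\{p,q,r\})\leq M\log p\leq \log s_{\sigma(6)}$. If $\sigma(6)\in\{1,2,4\}$, then $\log s_{\sigma(6)}\leq \max\{M_0,M_4\}\leq M_5\leq M_0+3M_5$ and the conclusion follows immediately. So assume $\sigma(6)\in\{3,5,6\}$, and the remaining task is to bound $\log s_{\sigma(6)}$.

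The key tool is the product form $(s_1-1)(s_6-1)=(s_2-1)(s_5-1)=(s_3-1)(s_4-1)=abcd$ of the $S$-unit equations \eqref{eq:Sunit-sys-gen}, which for any complementary pair $\{\ell,7-\ell\}$ disjoint from $\{\sigma(6),7-\sigma(6)\}$ rearranges to
\[s_{\sigma(6)}=\frac{(s_\ell-1)(s_{7-\ell}-1)}{s_{7-\sigma(6)}-1}+1.\]
My plan is to choose the pair $\{\ell,7-\ell\}$ so that it contains the missing index $j$ supplied by pigeonhole together with one of the ``small'' indices from $\{1,2\}$, giving $\log s_{\sigma(6)}\leq \log s_j+\log s_{\text{small}}+O(1)\leq 3M_5+M_0+O(1)$. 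The main obstacle is the two subcases $\sigma(6)=6$ with $3\notin\{\sigma(6),\tau(6),\rho(6)\}$ and $5\in\{\sigma(6),\tau(6),\rho(6)\}$, and $\sigma(6)=5$ with $3\notin$ and $6\in$: here the only available pair is $\{3,4\}$, and the naive bound $\log s_4\leq M_4\leq M_5$ would yield only $\log s_{\sigma(6)}\leq 4M_5$. The fix is the elementary inequality $s_4=bc+1<(ab+1)(ac+1)=s_1s_2$, which for $i\in\{1,2\}$ gives $\log s_4-\log s_i\leq \log s_{3-i}\leq M_0$. Applied with $i=7-\sigma(6)\in\{1,2\}$, this turns $\log s_4-\log(s_{7-\sigma(6)}-1)$ into a term of size at most $M_0+O(1)$, recovering the bound $\log s_{\sigma(6)}\leq 3M_5+M_0+O(1)$. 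Absorbing the harmless additive constants gives $M\log(\min\{p,q,r\})\leq M_0+3M_5$ as required.
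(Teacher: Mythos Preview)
Your argument is correct in outline, but it takes a considerably longer route than the paper's proof. After the same pigeonhole step (finding $j\in\{3,5,6\}$ with $j\notin\{\sigma(6),\tau(6),\rho(6)\}$ and hence $\log s_j\leq 3M_5$), the paper does not case-split on $\sigma(6)$ at all. Instead it simply observes that $d<s_j\leq\exp(3M_5)$ (since each of $s_3,s_5,s_6$ exceeds $d$) and $c<s_2\leq\exp(M_0)$, whence $s_6=cd+1<\exp(M_0+3M_5)$. Because $s_6$ is the largest of the $s_i$, every maximal prime power $p^{\alpha_{\sigma(6)}},q^{\beta_{\tau(6)}},r^{\gamma_{\rho(6)}}$ is at most $s_6$, and the bound on $M$ follows in one line. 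Your detour through the product identity, the case analysis on $\sigma(6)\in\{3,5,6\}$, and the ``obstacle'' subcases with the $s_4<s_1s_2$ fix is unnecessary.

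Two small points about your version as written. First, the line ``Absorbing the harmless additive constants'' is not quite honest: your estimate in the obstacle case gives $\log s_{\sigma(6)}\leq M_0+3M_5+\log 2+o(1)$, which is strictly weaker than the stated bound. This is easily repaired by noting that in those cases $(s_4-1)/(s_{7-\sigma(6)}-1)$ equals $b/a$ exactly (both are $bc/ac$ or similar), so $(s_4-1)/(s_{7-\sigma(6)}-1)\leq b\leq s_1-1<\exp(M_0)$ with no loss; then the exact inequality follows. Second, in the branch $\sigma(6)\in\{1,2,4\}$ you use $M_4\leq M_5$, which depends on the specific definition $M_5=\max\{M_0+\mathcal B,M_4\}$ from Proposition~\ref{prop:one-is-large} rather than on the hypotheses of the lemma alone; the paper's proof needs only $M_0$ and $M_5$.
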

 
 \begin{proof}
  Assume that $\{\sigma(6),\tau(6),\rho(6)\}\neq \{3,5,6\}$ and let $*\in\{3,5,6\}$ be such that $*\not\in \{\sigma(6),\tau(6),\rho(6)\}$. We obviously have that 
  $$d<s_*\leq p^{\alpha_{\sigma(5)}}q^{\beta_{\tau(5)}}r^{\gamma_{\rho(5)}}\leq \exp(3M_5)$$
  and also
  $$c<s_2=p^{\alpha_2}q^{\beta_2}r^{\gamma_2}\leq \exp(M_0).$$
  Therefore we obtain 
  $$\max\{p^{\alpha_{\sigma(6)}},q^{\beta_{\tau(6)}},r^{\gamma_{\rho(6)}}\}\leq s_6=cd+1\leq \exp(M_0+3M_5).$$
  Now, taking logarithms yields the content of the lemma.
 \end{proof}
 
 Let us denote by $\N=\{0,1,2,\dots\}$ the set of the non negative integers. Now, we can provide upper bounds for the largest exponents.

\begin{proposition}\label{lem:bound-absolute}
 Let us assume that $M> \frac{M_0+3M_5}{\log(\min\{p,q,r\})}$, $\sigma(6)$ and $x$ are complementary indices and that $y,z$ is another pair of complementary indices, with $y\in\{1,2,4\}$. If $\alpha_y=\alpha_z<\alpha_x$ (Case I) we put
 \begin{multline*}
 \mathcal M_{x,y,z}=\left\{ (a_x,b_x,c_x,a_y,b_y,c_y,b_z,c_z)\in \N^8\: :\phantom{\frac{M_4}{\log q}}\right.\\
  0\leq a_x\log p+b_x\log q+c_x\log r,a_y\log p+b_y\log q+c_y\log r \leq M_4, \; \text{and}\\
  a_y<a_x\; \text{and}\;\left.\left(0\leq b_z\leq \frac{M_5}{\log q},0\leq c_z\leq M \; \text{or}\;\;  0\leq b_z\leq M ,0\leq c_z\leq \frac{M_5}{\log r} \right) \right\},
 \end{multline*}
 if $\alpha_x=\alpha_z<\alpha_y$ (Case II) we put
 \begin{multline*}
 \mathcal M_{x,y,z}=\left\{ (a_x,b_x,c_x,a_y,b_y,c_y,b_z,c_z)\in \N^8\: :\phantom{\frac{M_4}{\log q}}\right.\\
  0\leq a_x\log p+b_x\log q+c_x\log r,a_y\log p+b_y\log q+c_y\log r \leq M_4, \; \text{and}\\
  a_x<a_y\; \text{and}\;\left.\left(0\leq b_z\leq \frac{M_5}{\log q},0\leq c_z\leq M \; \text{or}\;\;  0\leq b_z\leq M ,0\leq c_z\leq \frac{M_5}{\log r} \right) \right\},
 \end{multline*}
 and if $\alpha_x=\alpha_y$ (Case III) we put 
 \begin{multline*}
 \mathcal M_{x,y,z}=\left\{ (a_x,b_x,c_x,a_y,b_y,c_y,b_z,c_z)\in \N^8\: :\phantom{\frac{M_4}{\log q}}\right.\\
 0\leq a_x\log p+b_x\log q+c_x\log r,a_y\log p+b_y\log q+c_y\log r \leq M_4, \; \text{and}\\
  a_x=a_y\; \text{and}\;\left.\left(0\leq b_z\leq \frac{M_5}{\log q},0\leq c_z\leq M \; \text{or}\;\;  0\leq b_z\leq M ,0\leq c_z\leq \frac{M_5}{\log r} \right) \right\}.
 \end{multline*}
We define
\begin{align*}
 \mathcal C_p&=\max_{\mathcal M_{x,y,z}}\left\{\nu_p\left( \frac{q^{b_z}r^{c_z}\left(p^{a_y}q^{b_y}r^{c_y}-1\right)}{q^{b_y}r^{c_y}-p^{a_x-a_y}q^{b_x}r^{c_x}}-1\right) \right\}& &\text{Case I},\\
 \mathcal C_p&=\max_{\mathcal M_{x,y,z}}\left\{\nu_p\left( \frac{q^{b_z}r^{c_z}\left(p^{a_y}q^{b_y}r^{c_y}-1\right)}{p^{a_y-a_x}q^{b_y}r^{c_y}-q^{b_x}r^{c_x}}-1\right) \right\}& &\text{Case II},\\
 \mathcal C_p&=\max_{\mathcal M_{x,y,z}}\left\{\nu_p\left( \frac{q^{b_z}r^{c_z}\left(p^{a_y}q^{b_y}r^{c_y}-1\right)_p}{ \left(q^{b_y}r^{c_y}-q^{b_x}r^{c_x}\right)_p}-1 \right) \right\}& &\text{Case III},\\
 \end{align*}
 where $(\cdot)_p$ denotes the $p$-free part. Then 
 $$\alpha_{\sigma(6)}\leq \left\{ \begin{array}{cl} \frac{M_4}{\log p}+\mathcal C_p & \text{Cases I and II;}\\ \\ \frac{M_4+B_p}{\log p}+\mathcal C_p & \text{Case III.}\end{array} \right. $$
\end{proposition}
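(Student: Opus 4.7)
The plan is to exploit the $S$-unit equation relating the complementary pairs $\{x,\sigma(6)\}$ and $\{y,z\}$,
$$ s_{\sigma(6)}s_x - s_{\sigma(6)} - s_x = s_y s_z - s_y - s_z, $$
which rearranges to the identity
$$ s_{\sigma(6)}(s_x-1) = s_z(s_y-1) + (s_x - s_y). $$
The left-hand side has $\nu_p$ equal to $\alpha_{\sigma(6)}+\nu_p(s_x-1)$; since this must be large (otherwise there is nothing to prove), the two summands on the right are forced to share a common $\nu_p$, and in particular the quotient $s_z(s_y-1)/(s_y-s_x)$ is automatically a $p$-adic unit. Dividing the identity by $s_y-s_x$ and taking $\nu_p$ therefore yields the master formula
$$ \alpha_{\sigma(6)} = \nu_p(s_y-s_x) + \nu_p\!\left(\frac{s_z(s_y-1)}{s_y-s_x}-1\right) - \nu_p(s_x-1). $$

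The next step is to bound $\nu_p(s_y-s_x)$ and to identify the inner fraction with the expression defining $\mathcal{C}_p$. In Cases I and II one of $a_x,a_y$ strictly dominates the other, and after extracting the common factor $p^{\min\{a_x,a_y\}}$ the residual integer is a $p$-adic unit, so $\nu_p(s_y-s_x)=\min\{a_x,a_y\}\leq M_4/\log p$. Moreover, the extracted $p$-power cancels with the corresponding factor of $s_z$ (since $a_z=a_y$ in Case I and $a_z=a_x$ in Case II), and one checks that the cleaned-up quotient is exactly the ratio appearing inside $\mathcal{C}_p$ in the statement. In Case III ($a_x=a_y$) the residual factor $q^{b_y}r^{c_y}-q^{b_x}r^{c_x}$ may itself be divisible by $p$; factoring out the smaller powers of $q$ and $r$ reduces its $p$-valuation to that of some $q^{\beta}r^{\gamma}-1$ with $|\beta|\leq M_4/\log q$ and $|\gamma|\leq M_4/\log r$, so $\nu_p(s_y-s_x)\leq (M_4+B_p)/\log p$. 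Any residual $p$-powers in numerator and denominator of the fraction are then removed by the operators $(\cdot)_p$ to recover the Case III expression; this substitution is legitimate precisely because the quotient is already known to be a $p$-adic unit.

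It remains to verify that the tuple $(a_x,b_x,c_x,a_y,b_y,c_y,b_z,c_z)$ lies in $\mathcal{M}_{x,y,z}$: the bounds $\log s_x,\log s_y\leq M_4$ follow from $x,y\in\{1,2,4\}$ together with $\log s_1,\log s_2\leq M_0\leq M_4$ and $\log s_4\leq M_4$; the relations $a_y<a_x$, $a_x<a_y$, $a_x=a_y$ distinguishing the three cases are part of the hypothesis; and the disjunctive bound on $(b_z,c_z)$ follows because $z\in\{\tau(6),\rho(6)\}$ (an immediate consequence of Lemma \ref{lem:diff-big}), so that at most one of $\beta_z,\gamma_z$ can exceed the $M_5/\log$-threshold guaranteed by Proposition \ref{prop:one-is-large}. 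The main obstacle is the $p$-adic bookkeeping in Case III, where one must confirm that stripping the shared $p$-powers via the $(\cdot)_p$ operator preserves the value of $\nu_p(\text{fraction}-1)$: this relies crucially on the unit property of the quotient, which guarantees that numerator and denominator have the same $p$-adic valuation. Combining the bound on $\nu_p(s_y-s_x)$, the definition of $\mathcal{C}_p$, and the non-negativity of $\nu_p(s_x-1)$ then delivers the stated inequalities on $\alpha_{\sigma(6)}$.
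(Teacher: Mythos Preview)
Your argument is correct and follows essentially the same route as the paper's proof: both start from the $S$-unit equation for the complementary pairs $\{x,\sigma(6)\}$ and $\{y,z\}$, rewrite it as
\[
\frac{s_z(s_y-1)}{s_y-s_x}-1=\frac{s_{\sigma(6)}(s_x-1)}{s_y-s_x},
\]
compute $\nu_p(s_y-s_x)$ in each case, cancel the common power $p^{\min\{\alpha_x,\alpha_y\}}=p^{\alpha_z}$ (Cases~I and~II) or pass to $p$-free parts (Case~III) to recover the expression inside $\mathcal C_p$, and verify membership in $\mathcal M_{x,y,z}$ using Lemma~\ref{lem:diff-big} and Proposition~\ref{prop:one-is-large}. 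Your ``master formula'' with the explicit $-\nu_p(s_x-1)$ term is a clean way to package the valuation computation; the paper instead observes directly that $\nu_p(s_x-1)=0$ in Case~I (since $\alpha_x>0$) and handles Case~II by the symmetric swap. One small point worth making explicit: the fact that $x\in\{1,2,4\}$ (needed for $\log s_x\leq M_4$) follows because $x$ is complementary to $\sigma(6)\in\{3,5,6\}$, the latter inclusion being part of the conclusion of Lemma~\ref{lem:diff-big} under the standing hypothesis $M>(M_0+3M_5)/\log(\min\{p,q,r\})$.
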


\begin{proof}
 Let us assume that Case I holds. Then we consider the $S$-unit equation
 $$s_zs_y-s_z-s_y=s_{\sigma(6)}s_x -s_{\sigma(6)}-s_x$$
 which turns into
 $$\frac{s_z(s_y-1)}{s_y-s_x} -1=\frac{s_{\sigma(6)} (s_x-1)}{s_y-s_x}.$$
 First, we note that $\nu_p(s_y-s_x)=\alpha_y=\alpha_z$ and since $\alpha_x>0$ we have
 \begin{equation}\label{eq:bound-a6}
 \nu_p\left(\frac{s_{\sigma(6)} (s_x-1)}{s_y-s_x}\right)=\alpha_{\sigma(6)}-\alpha_y.
 \end{equation}
 Note that with these notations and assumptions we have after canceling a common factor $p^{\alpha_y}=p^{\alpha_z}$ the equation
 $$\frac{s_z(s_y-1)}{s_y-s_x} -1=\frac{q^{\beta_z}r^{\gamma_z}\left(p^{\alpha_y}q^{\beta_y}r^{\gamma_y}-1\right)}{q^{\beta_y}r^{\gamma_y}-p^{\alpha_x}q^{\beta_x}r^{\gamma_x}}-1$$
 with non negative exponents $\alpha_x,\alpha_y,\beta_x,\beta_y,\gamma_x$ and $\gamma_y$ satisfying $\alpha_y<\alpha_x$ and
 $$0\leq \alpha_x\log p+\beta_x\log q+\gamma_x\log r,\alpha_y\log p+\beta_y\log q+\gamma_y\log r \leq M_4.$$
 Moreover due to Lemma \ref{lem:diff-big} we have either $0\leq \beta_z\leq \frac{M_5}{\log q}$ and $0\leq \gamma_z\leq M$ or $0\leq \beta_z\leq M$ and $0\leq \gamma_z\leq \frac{M_5}{\log r}$. That is $(\alpha_x,\beta_x,\gamma_x,\alpha_y,\beta_y,\gamma_y,\beta_z,\gamma_z)\in \mathcal M_{x,y,z}$.
  Thus we conclude that
  $$\mathcal C_p\geq \nu_p \left( \frac{s_z(s_y-1)}{s_y-s_x} -1\right)=\nu_p\left(\frac{s_{\sigma(6)} (s_x-1)}{s_y-s_x}\right)=\alpha_{\sigma(6)}-\alpha_y.$$
  Which implies in view of $\alpha_y\leq M_4/\log p$ and \eqref{eq:bound-a6} the upper bound for $\alpha_{\sigma(6)}$.
  
  Almost the same arguments apply in the case that $\alpha_x=\alpha_z<\alpha_y$ holds, i.e. that Case II holds. We only have to exchanging the roles of $x$ and $y$. 
  
  In Case III, that is we have $\alpha_x=\alpha_y$, the proof runs along similar arguments. That is we consider the equation
  $$\frac{s_z(s_y-1)}{s_y-s_x} -1=\frac{s_{\sigma(6)} (s_x-1)}{s_y-s_x}. $$
  However this time we note that 
  $$\nu_p(s_y-s_x)= \alpha_x+\nu_p(q^{\beta_y-\beta_x}r^{\gamma_y-\gamma_x}-1) \leq \frac{M_4+B_p}{\log p}.$$
  This implies that 
  $$\alpha_{\sigma(6)}\leq \nu_p\left(\frac{s_z(s_y-1)}{s_y-s_x} -1\right) +\nu_p(s_y-s_x),$$
  and therefore the upper bound for $\alpha_{\sigma(6)}$. Finally let us note that a non vanishing factor $p$ of $\frac{s_z(s_y-1)}{s_y-s_x}$ would imply that $\nu_p\left(\frac{s_z(s_y-1)}{s_y-s_x} -1\right)\leq 0$, thus we may eliminate all possible powers of $p$ in the formula for $\mathcal C_p$, which accounts in the usage of $p$-free parts in the formulas for Case II.
\end{proof}

Let us note that we can prove similar results for $\beta_{\tau(6)}$ and $\gamma_{\rho(6)}$ by exchanging the roles of $p,q$ and $r$ respectively. For the sake of completeness we state the results for $\beta_{\tau(6)}$ and $\gamma_{\rho(6)}$ in the appendix but do not provide a proof since the proofs are identical after an appropriate permutation of $p,q$ and $r$. 

If we have no explicit bounds for $M_4$ and $M_5$ this proposition is hard to apply. However, if we have small, explicit upper bounds for $M_4$ and $M_5$ we can estimate $\mathcal C_p$ by applying lower bounds for linear forms in two $p$-adic logarithms (for details see Section \ref{sec:reduction}) which will yield reasonable small bound for $M$.

\section{The LLL-reduction}\label{sec:LLL}

In this section we gather some basic facts on LLL-reduced bases, approximation lattices and their applications to Diophantine problems as they can be found in \cite[Chapters 5 and 6]{Smart:DiGL}.

Let $\LL\subseteq \R^k$ be a $k$-dimensional lattice with LLL-reduced basis
$b_1,\dots,b_k$ and denote by $B$ be the matrix with columns $b_1,\dots, b_k$. Moreover, we denote by $b^*_1,\dots,b^*_k$ the orthogonal basis of $\R^k$ which we obtain
by applying the Gram-Schmidt process to the basis $b_1,\dots,b_k$. In particular, we have that
$$b^*_i=b_i-\sum_{j=1}^{i-1}\mu_{i,j}b^*_j, \qquad \mu_{i,j}=\frac{\langle b_i,b_j\rangle}{\langle b_j^*,b_j^*\rangle}.$$
Further, let us define
\begin{equation*}
l(\LL,y)=\begin{cases}
\min_{x \in \LL} \|x-y\|, & y \not\in \LL \\
\min_{0 \neq x \in \LL} \|y\|, & y \in \LL,
\end{cases}
\end{equation*}
where $\|\cdot\|$ denotes the euclidean norm on $\R^k$. It is well known,
that by applying the LLL-algorithm it is possible to give in polynomial time
a lower bound for $l(\LL,y) \geq \tilde c_1$ (see e.g. \cite[Section 5.4]{Smart:DiGL}):

\begin{lemma}\label{lem:lattice}
 Let $y\in \R^k$, $z=B^{-1}y$. Furthermore we define
 \begin{itemize}
 \item If $y\not\in\LL$ let $i_0$ be the largest index such that $z_{i_0}\neq 0$ and put $\sigma=\{z_{i_0}\}$, where $\{\cdot\}$ denotes the distance to the nearest integer.
 \item If $y\in \LL$ we put $\sigma=1$.
 \end{itemize}
 Finally let
 $$\tilde c_2=\max_{1\leq j\leq k}\left\{\frac{\|b_1\|^2}{\|b_j^*\|^2} \right\}.$$
 Then we have
 $$l(\LL,y)^2\geq \tilde c_2^{-1}\sigma^2 \|b_1\|^2=\tilde c_1.$$
\end{lemma}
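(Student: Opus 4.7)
The plan is to bound $\|y - x\|$ from below uniformly for $x$ ranging over $\LL$ (respectively, over nonzero lattice vectors if $y \in \LL$), by expanding $y - x$ in the orthogonal Gram--Schmidt basis $b_1^*,\dots,b_k^*$ and isolating a single well-chosen coefficient.

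I will first write $x = \sum_j c_j b_j$ with $c_j \in \Z$ and substitute $b_j = b_j^* + \sum_{i<j}\mu_{j,i}b_i^*$ together with $y = \sum_j z_j b_j$ to get
$$
y - x \;=\; \sum_{i=1}^k \eta_i\, b_i^*, \qquad \eta_i \;=\; (z_i - c_i) + \sum_{j>i}(z_j - c_j)\mu_{j,i}.
$$
Orthogonality of the $b_i^*$ then gives $\|y-x\|^2 = \sum_i \eta_i^2\|b_i^*\|^2 \geq \eta_{i_1}^2\|b_{i_1}^*\|^2$ for the largest index $i_1$ with $\eta_{i_1}\neq 0$ (such an $i_1$ exists unless $y=x$, which is excluded in the relevant cases).

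The crux is to show $|\eta_{i_1}| \geq \sigma$. The triangular shape of the Gram--Schmidt expansion allows a short descending induction: $\eta_j = 0$ for $j > i_1$ forces successively $c_k = z_k$, then $c_{k-1} = z_{k-1}$, and so on down to $c_{i_1+1} = z_{i_1+1}$. Since the $c_j$ must be integers, this forces $z_j\in\Z$ for every $j > i_1$, so $i_1$ is automatically at least as large as the largest index $i_0$ at which $z_{i_0}$ has nonzero fractional part (which is the intended meaning of $i_0$ in the statement, and which exists whenever $y\notin\LL$). After this induction we are left with $\eta_{i_1} = z_{i_1} - c_{i_1}$, whose minimum in absolute value over $c_{i_1}\in\Z$ equals $\{z_{i_1}\} = \sigma$ if $i_1 = i_0$, and is at least $1 \geq \sigma$ otherwise (because then $z_{i_1}\in\Z$ while $\eta_{i_1}\neq 0$ forces $c_{i_1}\neq z_{i_1}$). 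In the case $y\in\LL$ the same descending argument applied to an arbitrary nonzero $x\in\LL$ gives $|\eta_{i_1}|\geq 1 = \sigma$.

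To finish I will combine this with $\|b_{i_1}^*\|^2 \geq \|b_1\|^2/\tilde c_2$, which is immediate from the definition of $\tilde c_2$, to conclude
$$
\|y-x\|^2 \;\geq\; \sigma^2\,\|b_{i_1}^*\|^2 \;\geq\; \frac{\sigma^2\|b_1\|^2}{\tilde c_2} \;=\; \tilde c_1
$$
uniformly in the admissible $x$, so that taking the minimum over $x$ yields $l(\LL,y)^2 \geq \tilde c_1$. I expect the descending induction in the previous paragraph to be the delicate part: the Gram--Schmidt coefficients $\mu_{j,i}$ are generally non-integral, so the lower bound $|\eta_{i_1}|\geq \sigma$ is not visible from a casual glance at the closed formula for $\eta_i$; it is only the combination of the triangular shape of the change of basis with the integrality of the $c_j$ that forces the desired conclusion.
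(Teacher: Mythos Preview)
The paper does not supply its own proof of this lemma; it is quoted as a standard fact with a reference to \cite[Section~5.4]{Smart:DiGL}. Your argument is correct and is exactly the classical one: expand $y-x$ in the Gram--Schmidt basis, use the triangular shape of the change-of-basis to force $c_j=z_j$ for $j>i_1$ by descending induction, and then bound the surviving coefficient $\eta_{i_1}=z_{i_1}-c_{i_1}$ from below by $\sigma$.

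Your parenthetical remark about the ``intended meaning'' of $i_0$ is well taken. As literally written in the statement, $i_0$ is the largest index with $z_{i_0}\neq 0$; if that $z_{i_0}$ happens to be a nonzero integer then $\sigma=\{z_{i_0}\}=0$ and the inequality is vacuous (your induction still goes through, since in that situation one can have $i_1<i_0$ only when $z_j\in\Z$ for all $j>i_1$, whence $\sigma=0$ and the bound $|\eta_{i_1}|\geq 0$ is trivially met). Reading $i_0$ as the largest index with $\{z_{i_0}\}\neq 0$---which is how the lemma is actually used and how it appears in \cite{Smart:DiGL}---is what makes the bound informative, and your proof establishes precisely that stronger version.
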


In our applications we suppose that we are given real numbers $\eta_0,\eta_1,\dots,\eta_k$  linearly independent over $\Q$ and two positive constants
$\tilde c_3,\tilde c_4$ such that
\begin{equation} \label{eq:redform1}
|\eta_0+x_1\eta_1+\dots+x_k\eta_k| \le \tilde c_3\exp(-\tilde c_4H),
\end{equation}
where the integers $x_i$ are bounded by $|x_i| \leq X_i$ with $X_i$ given upper bounds for $1 \leq i \leq k$.
We write $X_0=\max_{1 \le i \le s}\{X_i\}$. The basic idea in such a situation, due to de Weger \cite{deWeger:1987}, is to approximate the linear form
\eqref{eq:redform1} by an approximation lattice. Namely, we consider the lattice $\LL$ generated by the columns of the matrix
\begin{equation*}
\mathcal{A}=\begin{pmatrix}
    1 & 0 & \dots & 0  & 0 \\
    0 & 1 & \dots & 0  & 0 \\
    \vdots & \vdots & \vdots & \vdots & \vdots \\
    0 & 0 & \dots & 1  & 0 \\
    \lfloor{\tilde C\eta_1}\rfloor & \lfloor{\tilde C\eta_2}\rfloor & \dots & \lfloor{\tilde C\eta_{k-1}}\rfloor & \lfloor{\tilde C\eta_k}\rfloor
\end{pmatrix}
\end{equation*}
where $\tilde C$ is a large constant usually of the size of about $X_0^k$. Let us assume that we have an LLL-reduced basis $b_1,\dots,b_k$ of $\LL$ and that we have a lower bound
$l(\LL,y)>\tilde c_1$ with $y=(0,0,\dots ,-\lfloor{C\eta_0}\rfloor)$. Note that $\tilde c_1$ can be computed by using the results of Lemma \ref{lem:lattice}. Then we have with these notations the following lemma (e.g. see \cite[Lemma VI.1]{Smart:DiGL}):

\begin{lemma}\label{lem:real-reduce}
Assume that $S=\sum_{i=1}^{k-1}X_i^2$ and $T=\frac{1+\sum_{i=1}^k{X_i}}{2}$. If $\tilde c_1^2 \ge T^2+S$, then inequality \eqref{eq:redform1} implies that
we have either $x_1=x_2=\dots=x_{k-1}=0$ and $x_k=-\frac{\lfloor{\tilde C\eta_0}\rfloor)}{\lfloor{\tilde C\eta_k}\rfloor)}$ or
\begin{equation} \label{eq:reduction-real}
H \leq \frac{1}{\tilde c_4}\left(\log(\tilde C\tilde c_3)-\log\left(\sqrt{\tilde c_1^2-S}-T\right) \right).
\end{equation}
\end{lemma}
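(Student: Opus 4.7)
The plan is to compare two bounds on $\|v - y\|$ for a generic lattice vector $v = \sum_{i=1}^k x_i b_i$, where $x_1, \ldots, x_k$ are the integers appearing in \eqref{eq:redform1}. By construction the first $k-1$ coordinates of $v$ coincide with $x_1, \ldots, x_{k-1}$, while its last coordinate is $\sum_{i=1}^k x_i \lfloor \tilde C \eta_i \rfloor$. Since the first $k-1$ coordinates of $y$ vanish, the contribution of those coordinates to $\|v-y\|^2$ is at most $\sum_{i=1}^{k-1} X_i^2 = S$.

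For the last coordinate, I would write $\lfloor \tilde C \eta_i \rfloor = \tilde C \eta_i - \epsilon_i$ with $|\epsilon_i| \leq 1/2$ (reading $\lfloor \cdot \rfloor$ as nearest-integer rounding), so that
\[
  \sum_{i=1}^k x_i \lfloor \tilde C \eta_i \rfloor + \lfloor \tilde C \eta_0 \rfloor = \tilde C\Bigl(\eta_0 + \sum_{i=1}^k x_i \eta_i\Bigr) - \epsilon_0 - \sum_{i=1}^k x_i \epsilon_i.
\]
Taking absolute values and invoking \eqref{eq:redform1} together with $|x_i| \leq X_i$, the right-hand side is bounded by $\tilde C \tilde c_3 \exp(-\tilde c_4 H) + T$. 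Consequently
\[
  \|v - y\|^2 \leq S + \bigl(\tilde C \tilde c_3 \exp(-\tilde c_4 H) + T\bigr)^2.
\]
On the other hand, Lemma \ref{lem:lattice} yields the lower bound $\|v - y\| \geq \tilde c_1$ whenever $v \neq y$. Combining the two estimates and using the hypothesis $\tilde c_1^2 \geq T^2 + S$ to ensure $\sqrt{\tilde c_1^2 - S} \geq T$, a rearrangement gives
\[
  \tilde C \tilde c_3 \exp(-\tilde c_4 H) \geq \sqrt{\tilde c_1^2 - S} - T,
\]
and taking logarithms and dividing by $\tilde c_4$ yields \eqref{eq:reduction-real}.

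It then only remains to dispose of the case $v = y$. Matching the first $k-1$ coordinates of $v$ and $y$ forces $x_1 = \cdots = x_{k-1} = 0$, after which equality of the last coordinates gives $x_k \lfloor \tilde C \eta_k \rfloor + \lfloor \tilde C \eta_0 \rfloor = 0$, which is precisely the exceptional representation spelled out in the lemma. The main subtlety in the argument is the careful bookkeeping of rounding errors in the last coordinate: one needs the total error to come out to $T = \tfrac{1}{2}(1+\sum X_i)$ rather than $2T$, which is where the interpretation of $\lfloor \cdot \rfloor$ as nearest-integer rounding is used. Apart from that, the proof is a mechanical combination of the upper bound coming from \eqref{eq:redform1} with the lattice-theoretic lower bound supplied by Lemma \ref{lem:lattice}.
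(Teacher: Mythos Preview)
The paper does not supply its own proof of this lemma; it simply refers the reader to \cite[Lemma VI.1]{Smart:DiGL}. Your argument is precisely the standard one found there: bound $\|v-y\|^2$ from above by $S$ plus the square of the last coordinate, bound that last coordinate via \eqref{eq:redform1} and the rounding errors, and compare with the lattice lower bound $\tilde c_1$. The proof is correct, and you rightly isolate the only delicate point, namely that $\lfloor\cdot\rfloor$ must be read as nearest-integer rounding so that the accumulated error is $T=\tfrac12(1+\sum X_i)$ rather than twice that. One minor remark: in the paper there is a notational clash between Lemma~\ref{lem:lattice}, where $\tilde c_1$ bounds $l(\mathcal L,y)^2$, and the paragraph preceding Lemma~\ref{lem:real-reduce}, where $\tilde c_1$ bounds $l(\mathcal L,y)$ itself; your usage is consistent with the latter, which is what the statement of Lemma~\ref{lem:real-reduce} requires.
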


\section{Reduction of the bounds}\label{sec:reduction}

In this section we describe how we can reduce the rather huge bounds for the exponents $\alpha_i,\beta_i,\gamma_i$ for $i=1,\dots,6$ for a given set of primes $S=\{p,q,r\}$ and how it is in practice possible to enumerate for the given set $S$ all $S$-Diophantine quadruples.
We give the full details for the proof of Theorem \ref{th:main}, i.e. the case that $S=\{2,3,5\}$ and will outline the strategy how to find all $S$-Diophantine quadruples for a general set of three primes $S$. The reduction process follows in eight steps:

\textbf{Step I:} We compute an upper bound for $M$ by using Proposition \ref{prop:first-upper-bound}. In the case that $S=\{2,3,5\}$ we obtain $M<1.26\cdot 10^{28}$. Note that in the case that $P<100$ which is relevant for the proof of Theorem \ref{th:search} we obtain the bound $M<2.88\cdot 10^{30}$.

\textbf{Step II:} We apply the LLL-reduction method explained in Section \ref{sec:LLL} to the linear forms of logarithms \eqref{eq:Lamb1} and \eqref{eq:Lamb2} in order to obtain a small bound for $M_0$.

In particular, we apply Lemma \ref{lem:real-reduce} to the linear form \eqref{eq:Lamb1}. In the case that $S=\{2,3,5\}$ we put $k=3$ and $\eta_1=\log 5, \eta_2=\log 3$ and $\eta_3=\log 2$. Moreover we have $0\leq A',B',C'\leq 2M$, i.e. we have to put $X_1=X_2=X_3=2.52\cdot 10^{28}$. Moreover let us put $\tilde c_3=2$ and $\tilde c_4=1$. With $\tilde C=10^{88}$ the assumptions of Lemma \ref{lem:real-reduce} are satisfied and we obtain
$$2\exp(- 136.4)>|A'\log 2+B'\log 3+C'\log 5|>\frac{2}{p^{\alpha_1}q^{\beta_1}r^{\gamma_1}}$$
which implies $\log s_1 \leq 136.4$. Since the exact same computations also apply to
\eqref{eq:Lamb2} we also obtain that $\log s_2<136.4$ and therefore we have $M_0<136.4$ we conclude that $s_4<s_1s_2<\exp(272.8)$. Thus we have $M_4\leq 272.8$.

\textbf{Step III:}
By a direct computation we are able to compute $B_p$, $B_q$ and $B_r$ by a brute force algorithm. The search implemented in Sage \cite{sage} takes on a usual PC a few seconds. In
particular we obtain in the case that $S=\{2,3,5\}$ the bounds
$$\frac{B_p}{\log p}=18, \qquad \frac{B_q}{\log q}=13, \qquad \frac{B_r}{\log r}=8. $$
By Proposition \ref{prop:one-is-large} this yields
$$
\alpha_{\sigma(5)}\leq 392, \qquad \beta_{\tau(5)}\leq 247, \qquad \gamma_{\rho(5)}\leq 169.
$$
In particular, we have $M_5=M_4\leq 272.8 $.

\textbf{Step IV:} In this step we use the theorem of Bugeaud and Laurent (Lemma~\ref{lem:Bugeaud}) to estimate the quantities $\mathcal C_p$ and obtain a new upper bound for $\alpha_{\sigma(6)}$ due to Proposition~\ref{lem:bound-absolute} (and the upper bounds for $\mathcal C_q$ and $\mathcal C_r$ by Propositions~\ref{lem:bound-absolute-b} and \ref{lem:bound-absolute-c} in the Appendix). Since the computations for $\mathcal C_q$ and $\mathcal C_r$ are similar we give the details only for $\mathcal C_p$ for Case I note that the upper bounds for Cases II and III can be deduced similarly.

To find an upper bound for $\mathcal C_p$ we split up the set $\mathcal M_{x,y,z}$ into two subsets  $\mathcal M'_{x,y,z},\mathcal M''_{x,y,z}\subseteq \mathcal M_{x,y,z}$ with
\begin{multline*}
 \mathcal M'_{x,y,z}=\left\{ (a_x,b_x,c_x,a_y,b_y,c_y,b_z,c_z)\in \N^8\: :\phantom{\frac{M_4}{\log q}}\right.\\
  0\leq a_x\log p+b_x\log q+c_x\log r,a_y\log p+b_y\log q+c_y\log r \leq M_4, \; \text{and}\\
 a_y<a_x\; \text{and}\; \left.0\leq b_z\leq \frac{M_5}{\log q},0\leq c_z\leq M\right\}
 \end{multline*}
and
\begin{multline*}
 \mathcal M''_{x,y,z}=\left\{ (a_x,b_x,c_x,a_y,b_y,c_y,b_z,c_z)\in \N^8\: :\phantom{\frac{M_4}{\log q}}\right.\\
  0\leq a_x\log p+b_x\log q+c_x\log r,a_y\log p+b_y\log q+c_y\log r \leq M_4, \; \text{and}\\
  a_y<a_x\; \text{and}\; \left.0\leq b_z\leq M,0\leq c_z\leq \frac{M_5}{\log r}\right\}
 \end{multline*}
and also consider the two quantities
$$\mathcal C'_p=\max_{\mathcal M'_{x,y,z}}\left\{\nu_p\left( \frac{q^{b_z}r^{c_z}\left(p^{a_y}q^{b_y}r^{c_y}-1\right)}{q^{b_y}r^{c_y}-p^{a_x-a_y}q^{b_x}r^{c_x}}-1\right) \right\}  $$
and
$$\mathcal C''_p=\max_{\mathcal M''_{x,y,z}}\left\{\nu_p\left( \frac{q^{b_z}r^{c_z}\left(p^{a_y}q^{b_y}r^{c_y}-1\right)}{q^{b_y}r^{c_y}-p^{a_x-a_y}q^{b_x}r^{c_x}}-1\right) \right\}.$$
Of course we have $\mathcal C_p=\max\{\mathcal C'_p,\mathcal C''_p\}$ and we have to find
upper bounds for $\mathcal C'_p$ (see Step IV (a)) and $\mathcal C''_p$ (see Step IV (b)). Similarly we can define $\mathcal C'_q, \mathcal C''_q,\mathcal C'_r$ and $\mathcal C''_r$ which will yield bounds for $\mathcal C_q$ and $\mathcal C_r$ respectively.

\textbf{Step IV (a):} In this step we compute an upper bound for $\mathcal C'_p$. We put 
$$\eta_1=r,\qquad 
\eta_2=\frac{q^{b_z}(p^{a_y}q^{b_y}r^{c_y}-1)}{q^{b_y}r^{c_y}-p^{a_x-a_y}q^{b_x}r^{c_x}}$$
and $b_1=c_z$ and $b_2=1$. We find that
\begin{align*}
h(\eta_2) &\leq \max\{b_z\log q +\log (p^{a_y}q^{b_y}r^{c_y}-1),\log (q^{b_y}r^{c_y}-p^{a_x-a_y}q^{b_x}r^{c_x})\}\\ &\leq M_5+M_4
\end{align*}
since we have $0\leq b_z\leq \frac{M_5}{\log q}$ and 
$$0\leq a_x\log p+b_x\log q+c_x\log r,a_y\log p+b_y\log q+c_y\log r \leq M_4.$$
Applying Lemma~\ref{lem:Bugeaud} with this data we obtain a new hopefully smaller upper bound for $A=\alpha_{\sigma(6)}$. 

\textbf{Step IV (b):} In this step we compute an upper bound for $\mathcal C''_p$. In this case we put $\eta_1=q$, 
$$\eta_2=\frac{r^{c_z}(p^{a_y}q^{b_y}r^{c_y}-1)}{q^{b_y}r^{c_y}-p^{a_x-a_y}q^{b_x}r^{c_x}}$$
and $b_1=b_z$ and $b_2=1$. We find that
$$h(\eta_2)\leq \max\{c_z\log r +\log (s_y-1),\log (s_y-s_x)\}\leq M_5+M_4$$
since we have $0\leq c_z\leq \frac{M_5}{\log r}$ and 
$$0\leq a_x\log p+b_x\log q+c_x\log r,a_y\log p+b_y\log q+c_y\log r \leq M_4.$$
Again we apply Lemma~\ref{lem:Bugeaud} with this data and we obtain an upper bound for $A=\alpha_{\sigma(6)}$ in this case. 

Similarly we find upper bounds in Cases II and III. We also find upper bounds for $B=\beta_{\tau(6)}$ and $C=\gamma_{\rho(6)}$ by using Propositions \ref{lem:bound-absolute-b} and \ref{lem:bound-absolute-c} instead of Propositions \ref{lem:bound-absolute}.

In the case that $S=\{2,3,5\}$ we obtain 
$$A<7.17 \cdot 10^8,\qquad B<1.73\cdot 10^8, \qquad C<6.33\cdot 10^7.$$

\textbf{Step V:} We repeat the process of the Steps II-IV iteratively with the new found bounds for $M_0$ and $M$ four more times and will find significantly smaller bounds. In the case that $S=\{2,3,5\}$ we obtain after five LLL-reductions the upper bounds $M_0<33.624$, $M_4=M_5<67.248$ and
$$A<4.51 \cdot 10^6,\qquad B< 1.3 \cdot 10^6,\qquad  C< 1.01 \cdot 10^6.$$

We implemented the Steps I-V in Sage. It took a usual desk top PC only a few seconds to obtain the bounds stated in Step V. Before we proceed with Step VI let us summarize
what we proved so far in the case that $S=\{2,3,5\}$:

\begin{proposition}
 Assume that $(a,b,c,d)$ is a $\{2,3,5\}$-Diophantine Quadruple with $a<b<c<d$. Then $\log s_1,\log s_2 <33.624$ and $\log s_4<67.25$.
\end{proposition}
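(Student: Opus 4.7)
The plan is to formalize the iterative reduction procedure outlined in Steps I--V as an honest proof. The statement is essentially the output of running five iterations of the LLL/Bugeaud--Laurent cycle for the prime set $S=\{2,3,5\}$, so the task is to present the iteration as a bookkeeping argument and check that the numerical output at the fifth round is bounded by $33.624$ and $67.25$ respectively.

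First I would set the initial data: by Proposition~\ref{prop:first-upper-bound} applied with $P=5$ we have $M<1.26\cdot 10^{28}$, and the trivial estimate $\log s_4<\log(s_1 s_2)\leq 2M_0$ coming from $s_4=bc+1<(ab+1)(ac+1)$. This yields the starting tuple $(M_0^{(0)},M_4^{(0)},M^{(0)})$ that can be fed into Step~II. Then I would describe one iteration $(M_0^{(k)},M_4^{(k)},M^{(k)})\mapsto (M_0^{(k+1)},M_4^{(k+1)},M^{(k+1)})$ in full, and record the numerical output of iterates $k=1,\dots,5$ in a small table. A single iteration consists of:
\begin{enumerate}
\item Applying Lemma~\ref{lem:real-reduce} to the linear forms \eqref{eq:Lamb1} and \eqref{eq:Lamb2} with $k=3$, $\eta_1=\log 5$, $\eta_2=\log 3$, $\eta_3=\log 2$, $\tilde c_3=2$, $\tilde c_4=1$, and $X_1=X_2=X_3=2M^{(k)}$. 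The approximation constant $\tilde C$ has to be chosen of size $(X_0)^3$; a fresh LLL-reduction on the corresponding lattice $\mathcal A$ produces an upper bound for $\log s_1,\log s_2$ which we call $M_0^{(k+1)}$, and hence $M_4^{(k+1)}:=2M_0^{(k+1)}$.
\item Computing (or reusing, since it does not depend on $M$) the brute-force quantities $B_p,B_q,B_r$ for $p=2,q=3,r=5$; Proposition~\ref{prop:one-is-large} then gives an explicit value for $M_5^{(k+1)}=\max\{M_0^{(k+1)}+\mathcal B,M_4^{(k+1)}\}$.
\item Applying Lemma~\ref{lem:Bugeaud} in the way prescribed in Step~IV(a) and IV(b) (and in Cases~II and III) to each of $\mathcal C_p,\mathcal C_q,\mathcal C_r$, using $h(\eta_2)\leq M_4^{(k+1)}+M_5^{(k+1)}$; Proposition~\ref{lem:bound-absolute} (and its two companions in the appendix) then delivers new bounds for $\alpha_{\sigma(6)},\beta_{\tau(6)},\gamma_{\rho(6)}$, whose maximum is $M^{(k+1)}$.
\end{enumerate}

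Once one iteration is spelled out, the proof is simply the observation that $(M_0^{(k)},M_4^{(k)},M^{(k)})$ is a decreasing sequence as long as the reduction is effective, because a smaller $M^{(k)}$ shrinks the lattice bounds $X_i$ in Lemma~\ref{lem:real-reduce} and, through the height $h(\eta_2)$, the Bugeaud--Laurent estimate, which in turn shrinks $M^{(k+1)}$. Carrying this out five times, as announced in Step~V, gives the stated numerical bounds $M_0^{(5)}<33.624$ and $M_4^{(5)}=M_5^{(5)}<67.248$, and ipso facto $\log s_1,\log s_2<33.624$ and $\log s_4<67.25$. The verification that each iteration truly reduces (no cycle) is ensured by the explicit numerical decrease: after the first LLL pass $M_0$ drops from order $10^{28}$ to roughly $10^{2}$, after which successive iterations bring it down to the claimed bound.

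The main obstacle is bookkeeping rather than conceptual: one must choose the LLL constant $\tilde C$ correctly at every iteration (large enough so that $\tilde c_1^2\geq T^2+S$ holds, and small enough so that \eqref{eq:reduction-real} gives a useful bound), and must verify that the Bugeaud--Laurent application in Step~IV does not fall into the degenerate case where $\eta_1,\eta_2$ are multiplicatively dependent. The latter is clear since $\eta_1\in\{2,3,5\}$ is a prime while $\eta_2$ is a rational whose numerator and denominator are small $S$-unit combinations of $\{2,3,5\}$, so multiplicative independence can be checked mechanically at each iteration. Assuming these two points go through, the proof consists of running the Sage script described in Steps~I--V and reading off the final line of output.
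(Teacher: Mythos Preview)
Your proposal is correct and follows essentially the same approach as the paper: the proposition is not given a separate proof in the paper but is explicitly presented as the summary of Steps~I--V, and your write-up is a faithful formalization of exactly that iterative LLL/Bugeaud--Laurent cycle with the same choices of linear forms, lattice data, and termination after five rounds.
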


\textbf{Step VI:}  
In this step we find all $\{p,q,r\}$-Diophantine triples $(a,b,c)$ such that $\log(ab+1)=\log s_1,\log(ac+1)=\log s_2<M_0$ and $\log (bc+1)=\log s_4<M_4$. Therefore we proceed as follows:

\begin{enumerate}
 \item We enumerate all $S$-units $s_4=p^{\alpha_4}q^{\beta_4}r^{\gamma_4}$ with $\log s_4<M_4$.
 \item We enumerate all $S$-units $s_2=p^{\alpha_2}q^{\beta_2}r^{\gamma_2}$ with $\log s_2<\min\{\log s_4, M_0\}$ and $s_2\nmid s_4$. In particular, we consider only those $s_2$ which satisfy $\gamma_2>\gamma_4$ in case that $\alpha_2\leq \alpha_4$ and $\beta_2\leq \beta_4$.
 \item We compute for all pairs $s_2,s_4$ the quantity $B_c=\frac{s_4-s_2}{\gcd (s_2,s_4)}$. Due to Lemma~\ref{lem:abcd-div} we have $c\leq B_c$. If $(B_c-1)B_c+1<s_4$ we discard the pair $s_2,s_4$ since otherwise we would have
 $$bc+1<(B_c-1)B_c+1<s_4=bc+1,$$
 a contradiction.
 \item We enumerate all $S$-units $s_1=p^{\alpha_1}q^{\beta_1}r^{\gamma_1}$ with $s_1<s_2$ such that
 \begin{itemize}
 \item if $p=2$ and $\alpha_4,\alpha_2>1$, then $\alpha_1\leq 1$;
  \item if $p\equiv 3\mod 4$ and $\alpha_4,\alpha_2>0$, then $\alpha_1=0$;
  \item if $q\equiv 3\mod 4$ and $\beta_4,\beta_2>0$, then $\beta_1=0$;
  \item if $r\equiv 3\mod 4$ and $\gamma_4,\gamma_2>0$, then $\gamma_1=0$.
 \end{itemize}
That is we enumerate all $S$-units $s_1$ such that the content of Lemma \ref{lem:quad-res} is not violated for the $S$-Diophantine triple $(a,b,c)$.
\item For all these triples $(s_1,s_2,s_4)$ we compute $a_\square=\frac{(s_1-1)(s_2-1)}{s_4-1}$. We discard all triples $(s_1,s_2,s_4)$ for which $a_\square$ is not the square of an integer. For the remaining triples we compute $a=\sqrt{a_\square}$.
\item We discard all triples $(s_1,s_2,s_4)$ for which $b=\frac{s_1-1}a$ and $c=\frac{s_2-1}a$ are not integers.
\item We store all triples $(a,b,c)$ in a list \texttt{Triples}.
\end{enumerate}

In the case that $S=\{2,3,5\}$ the list \texttt{Triples} consists of the $S$-Diophantine triples
\begin{gather*}
 \{(1, 3, 5), (1, 7, 9), (1, 15, 17), (1, 23, 89), (1, 5, 19), (1, 2, 4), (1, 5, 7), (1, 47, 49),\\
 (1, 19, 485), (1, 7, 23), (1, 17, 19), (1, 31, 47), (1, 49, 119), (1, 7, 3749), (1, 3, 53),\\
 (1, 11, 29), (1, 9, 71), (1, 2, 7), (1, 7, 17), (1, 4, 11), (1, 11, 49), (3, 13, 83), (1, 17, 127),\\
 (2, 7, 1562), (1, 19, 1151), (1, 3, 8), (1, 9, 11), (1, 17, 47), (1, 159, 161), (1, 29, 31),\\
 (1, 19, 71), (1, 127, 287), (1, 7, 107), (1, 89, 8191), (1, 24, 26), (1, 44, 71), (1, 29, 431)\}. 
\end{gather*}

Let us note that Step VI is the bottle neck of our algorithm. It took more than six and a half hours to find all these triples using a usual PC (Intel i7-8700) on a single core.

\textbf{Step VII:} Let us fix one $S$-Diophantine triple $(a,b,c)$ from the list \texttt{Triples} computed in Step VI. Let us assume that this triple can be extended to a $S$-Diophantine quadruple. In this step we reduce the previously found huge upper bound for $s_3=ad+1$ by using the LLL-reduction (Lemma \ref{lem:real-reduce}). Therefore we consider the inequality
$$ \frac{b}{a}\cdot \frac{ad+1}{bd+1}-1=\frac{b-a}{abd+a}<\frac{1}{ad+1}$$
and taking logarithms we obtain
\begin{equation}\label{eq:trip_red}
 \left|\log (b/a)+(\alpha_3-\alpha_5)\log p+(\beta_3-\beta_5)\log q+(\gamma_3-\gamma_5)\log r\right| <\frac 2{s_3}
\end{equation}
We apply Lemma \ref{lem:real-reduce} to this Diophantine inequality that is we have 
$$k=3,\quad \eta_1=\log r,\quad  \eta_2=\log q,\quad \eta_4=\log p, \quad \eta_0=\log (b/a).$$
Moreover we put $X_1=A$,  $X_2=B$ and $X_3=C$, where $A,B$ and $C$ are the bounds for $\alpha_{\sigma(6)}, \beta_{\tau(6)}$ and $\gamma_{\rho(6)}$ found in Step V.
We distinguish now between two cases.

\textbf{Case I:} Let us assume that $p,q,r$ and $a/b$ are not multiplicatively dependent. For example this happens for the $\{2,3,5\}$-Diophantine triple $(1,7,9)$. We compute $l(\mathcal L,y)$ by using Lemma \ref{lem:lattice} with
$$
\mathcal{A}=\begin{pmatrix}
    1 & 0 & 0 \\
    0 & 1 & 0 \\
    \lfloor{C\log r}\rfloor & \lfloor{C\log q}\rfloor & \lfloor{C\log p}\rfloor
\end{pmatrix}=\begin{pmatrix}
    1 & 0 & 0 \\
    0 & 1 & 0 \\
    \lfloor{\tilde C\log 5}\rfloor & \lfloor{\tilde C\log 3}\rfloor & \lfloor{\tilde C\log 2}\rfloor
\end{pmatrix}
$$
and 
$$y=(0,0,-\lfloor - \tilde C \log (b/a) \rfloor)^T=(0,0,-\lfloor - \tilde C \log 7 \rfloor)^T.$$
With $\tilde C=10^{25}$ we obtain by Lemma \ref{lem:real-reduce} the bound $\log s_3<46.6$ in the case of the $\{2,3,5\}$-Diophantine triple $(1,7,9)$.

\textbf{Case II:}  Let us assume that $p,q,r$ and $a/b$ are multiplicatively dependent and that $b/a=p^{x_p}q^{x_q}r^{x_r}$. Then Inequality \eqref{eq:trip_red} turns into
$$\left|(\alpha_3-\alpha_5+x_p)\log p+(\beta_3-\beta_5+x_q)\log q+(\gamma_3-\gamma_5+x_r)\log r\right| <\frac 2{s_3}.$$
Thus we compute $l(\mathcal L,y)$ by using Lemma \ref{lem:lattice} with
$$
\mathcal{A}=\begin{pmatrix}
    1 & 0 & 0 \\
    0 & 1 & 0 \\
    \lfloor{C\log r}\rfloor & \lfloor{C\log q}\rfloor & \lfloor{C\log p}\rfloor
\end{pmatrix}
$$
and $y=(0,0,0)^T$. We put $X_1=A+|x_p|$,  $X_2= B+|x_q|$ and $X_3= C+|x_r|$ and $\tilde C$ sufficiently large and obtain an upper bound for $\log s_3$. For example in the case of the $\{2,3,5\}$-Diophantine triple $(1,3,5)$ we have $x_2=x_5=0$ and $x_3=1$ and obtain with this strategy $\log s_3<39.4$ if we choose $\tilde C=10^{25}$.

Let us note that in the case that $S=\{2,3,5\}$ we obtain in all cases the upper bound $\log s_3<40$.

\textbf{Step VIII:}
Let $M_3$ denote the upper bound for $\log s_3$ found in Step VII. We enumerate all $ad+1=s_3=p^{\alpha_3}q^{\beta_3}r^{\gamma_3}$ with $\log s_3<M_3$ and for each triple $(a,b,c)$ found in Step VI we compute $d=\frac{s_3-1}a$. We discard all quadruples $(a,b,c,d)$ for which $d$ is not an integer and for which $bd+1$ and $cd+1$ are not $S$-units.

Note that in the case that $S=\{2,3,5\}$ we have $M_3=40$. However a rather quick computer search yields that no triple can be extended to a quadruple and the proof of Theorem \ref{th:main} is therefore complete.

\begin{remark}
Finally let us note that the algorithm presented in this paper would also work for arbitrary $S$ with $|S|\geq 3$ after some modifications. For instance in Step~IV we would have to apply the results of Yu on linear forms in $p$-adic logarithms (e.g. the results from \cite{Yu:2007}) instead of the results due to Bugeaud and Laurent \cite{Bugeaud:1996} on linear forms in two $p$-adic logarithms. This will lead to much larger bounds for $M_0$ and $M_5$ than what we get in the case $|S|=3$.

Moreover, Step VI in which we search for extendable triples seems to get unfeasible in the case $|S|\geq 4$. Thus all together a systematic search for $S$-Diophantine quadruples or even quintuples in the case $|S|\geq 4$ seems to be unfeasible with the method presented in this paper.
\end{remark}

\section{Appendix -- Bounds for $\beta_{\tau(6)}$ and $\gamma_{\rho(6)}$}

In this appendix we explicitly state the upper bounds for $\beta_{\tau(6)}$ and $\gamma_{\rho(6)}$ we get by adjusting the proof of Proposition \ref{lem:bound-absolute}. A bound for $\beta_{\tau(6)}$ is given by:

\begin{proposition}\label{lem:bound-absolute-b}
 Let us assume that $M> \frac{M_0+3M_5}{\log(\min\{p,q,r\})}$, $\tau(6)$ and $x$ are complementary indices and that $y,z$ is another pair of complementary indices, with $y\in\{1,2,4\}$. If $\beta_y=\beta_z<\beta_x$ (Case I) we put
 \begin{multline*}
 \mathcal M_{x,y,z}=\left\{ (a_x,b_x,c_x,a_y,b_y,c_y,a_z,c_z)\in \N^8\: :\phantom{\frac{M_4}{\log q}}\right.\\
  0\leq a_x\log p+b_x\log q+c_x\log r,a_y\log p+b_y\log q+c_y\log r \leq M_4, \; \text{and}\\
  b_y<b_x\; \text{and}\;\left.\left(0\leq a_z\leq \frac{M_5}{\log p},0\leq c_z\leq M \; \text{or}\;\;  0\leq a_z\leq M ,0\leq c_z\leq \frac{M_5}{\log r} \right) \right\},
 \end{multline*}
 if $\beta_x=\beta_z<\beta_y$ (Case II) we put
 \begin{multline*}
 \mathcal M_{x,y,z}=\left\{ (a_x,b_x,c_x,a_y,b_y,c_y,a_z,c_z)\in \N^8\: :\phantom{\frac{M_4}{\log q}}\right.\\
  0\leq a_x\log p+b_x\log q+c_x\log r,a_y\log p+b_y\log q+c_y\log r \leq M_4, \; \text{and}\\
  b_x<b_y\; \text{and}\;\left.\left(0\leq a_z\leq \frac{M_5}{\log p},0\leq c_z\leq M \; \text{or}\;\;  0\leq a_z\leq M ,0\leq c_z\leq \frac{M_5}{\log r} \right) \right\},
 \end{multline*}
 and if $\beta_x=\beta_y$ (Case III) we put 
 \begin{multline*}
 \mathcal M_{x,y,z}=\left\{ (a_x,b_x,c_x,a_y,b_y,c_y,a_z,c_z)\in \N^8\: :\phantom{\frac{M_4}{\log q}}\right.\\
 0\leq a_x\log p+b_x\log q+c_x\log r,a_y\log p+b_y\log q+c_y\log r \leq M_4, \; \text{and}\\
  b_x=b_y\;\text{and}\;\left.\left(0\leq a_z\leq \frac{M_5}{\log p},0\leq c_z\leq M \; \text{or}\;\;  0\leq a_z\leq M ,0\leq c_z\leq \frac{M_5}{\log r} \right) \right\}.
 \end{multline*}
We define
\begin{align*}
 \mathcal C_q&=\max_{\mathcal M_{x,y,z}}\left\{\nu_q\left( \frac{p^{a_z}r^{c_z}\left(p^{a_y}q^{b_y}r^{c_y}-1\right)}{p^{a_y}r^{c_y}-p^{a_x}q^{b_x-b_y}r^{c_x}}-1\right) \right\}& &\text{Case I},\\
 \mathcal C_q&=\max_{\mathcal M_{x,y,z}}\left\{\nu_q\left( \frac{p^{a_z}r^{c_z}\left(p^{a_y}q^{b_y}r^{c_y}-1\right)}{p^{a_y}q^{b_y-b_x}r^{c_y}-p^{a_x}r^{c_x}}-1\right) \right\}& &\text{Case II},\\
 \mathcal C_q&=\max_{\mathcal M_{x,y,z}}\left\{\nu_q\left( \frac{p^{a_z}r^{c_z}\left(p^{a_y}q^{b_y}r^{c_y}-1\right)_q}{ \left(p^{a_y}r^{c_y}-p^{a_x}r^{c_x}\right)_q}-1 \right) \right\}& &\text{Case III},\\
 \end{align*}
 where $(\cdot)_q$ denotes the $q$-free part. Then 
 $$\beta_{\tau(6)}\leq \left\{ \begin{array}{cl} \frac{M_4}{\log q}+\mathcal C_q & \text{Cases I and II;}\\ \\ \frac{M_4+B_q}{\log q}+\mathcal C_q & \text{Case III.}\end{array} \right. $$
\end{proposition}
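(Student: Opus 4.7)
The plan is to transcribe the proof of Proposition \ref{lem:bound-absolute} with the roles of $p$ and $q$ (and therefore of the exponent sequences $(\alpha_i)$ and $(\beta_i)$, and of $\sigma$ and $\tau$) interchanged. I begin with the same $S$-unit equation
\[s_zs_y-s_z-s_y=s_{\tau(6)}s_x-s_{\tau(6)}-s_x\]
from the system \eqref{eq:Sunit-sys-gen}, rearrange it as
\[\frac{s_z(s_y-1)}{s_y-s_x}-1=\frac{s_{\tau(6)}(s_x-1)}{s_y-s_x},\]
and apply $\nu_q$ rather than $\nu_p$. Two observations used in the original proof carry over verbatim: $\nu_q(s_i-1)=0$ whenever $q\mid s_i$ (since then $s_i-1\equiv -1\pmod q$), and $\nu_q(s_i-s_j)=\min\{\beta_i,\beta_j\}$ whenever $\beta_i\neq \beta_j$.

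In Cases I and II one of $\beta_x,\beta_y$ strictly exceeds the other, so $\nu_q(s_y-s_x)=\min\{\beta_x,\beta_y\}$ and the larger of the two forces $\nu_q$ of the corresponding $s_i-1$ to be zero. Factoring the common power $q^{\min\{\beta_x,\beta_y\}}$ out of the numerator $s_z(s_y-1)$ (using $\beta_z=\beta_y$ in Case I or $\beta_z=\beta_x$ in Case II) and the denominator $s_y-s_x$ produces exactly the rational function appearing inside $\mathcal C_q$, with the denominator written in the Case I or Case II form depending on which of $\beta_x,\beta_y$ is larger. The defining bounds on $\mathcal M_{x,y,z}$ are then supplied by the hypotheses: $\log s_x,\log s_y\leq M_4$ because $y\in\{1,2,4\}$ by assumption and $x\in\{1,2,4\}$ is the complement of $\tau(6)\in\{3,5,6\}$ (using Lemma \ref{lem:diff-big}); and the dichotomy on $(a_z,c_z)$ records whether the free index $z$ coincides with $\sigma(6)$ or with $\rho(6)$, using Proposition \ref{prop:one-is-large} to bound the remaining exponent by $M_5/\log p$ or $M_5/\log r$ respectively. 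Taking $\nu_q$ of the right-hand side yields $\beta_{\tau(6)}-\min\{\beta_x,\beta_y\}$, and combining with $\min\{\beta_x,\beta_y\}\leq M_4/\log q$ gives the asserted bound $\beta_{\tau(6)}\leq M_4/\log q+\mathcal C_q$.

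In Case III ($\beta_x=\beta_y$) the identity $\nu_q(s_y-s_x)=\min\{\beta_x,\beta_y\}$ fails; instead, after factoring $q^{\beta_x}$ we obtain $\nu_q(s_y-s_x)=\beta_x+\nu_q(p^{\alpha_y-\alpha_x}r^{\gamma_y-\gamma_x}-1)$, which is at most $(M_4+B_q)/\log q$ by the definition of $B_q$. Passing to the $q$-free parts $(\cdot)_q$ in both $s_z(s_y-1)$ and $s_y-s_x$ kills the cancelling $q$-powers (which would otherwise make $\nu_q$ of the expression inside $\mathcal C_q$ meaninglessly negative) and produces the Case III formula without changing the value of $\nu_q(\cdot-1)$; this yields $\beta_{\tau(6)}\leq (M_4+B_q)/\log q+\mathcal C_q$. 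I do not foresee a genuine analytic obstacle—no new ingredient beyond Proposition \ref{lem:bound-absolute} is needed—and the principal task is the clerical one of checking that each invocation of Lemma \ref{lem:diff-big} and Proposition \ref{prop:one-is-large} survives the relabelling $(p,q,r,\sigma)\mapsto(q,p,r,\tau)$, in particular that the surviving $\alpha$- and $\gamma$-exponents of $s_z$ are exactly the pair controlled by the $M_5/\log p$ and $M_5/\log r$ bounds.
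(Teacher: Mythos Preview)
Your proposal is correct and follows precisely the approach the paper itself indicates: the paper does not give a separate proof of Proposition~\ref{lem:bound-absolute-b} but states that ``the proofs are identical after an appropriate permutation of $p,q$ and $r$.'' Your transcription of the argument of Proposition~\ref{lem:bound-absolute} under $(p,\alpha,\sigma)\leftrightarrow(q,\beta,\tau)$ is accurate, including the use of Lemma~\ref{lem:diff-big} to force $\tau(6)\in\{3,5,6\}$ (hence $x\in\{1,2,4\}$) and the identification of the $(a_z,c_z)$-dichotomy with the two possibilities $z=\rho(6)$ or $z=\sigma(6)$.
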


A bound for $\gamma_{\rho(6)}$ is given by:

\begin{proposition}\label{lem:bound-absolute-c}
 Let us assume that $M> \frac{M_0+3M_5}{\log(\min\{p,q,r\})}$, $\rho(6)$ and $x$ are complementary indices and that $y,z$ is another pair of complementary indices, with $y\in\{1,2,4\}$. If $\gamma_y=\gamma_z<\gamma_x$ (Case I) we put
 \begin{multline*}
 \mathcal M_{x,y,z}=\left\{ (a_x,b_x,c_x,a_y,b_y,c_y,a_z,b_z)\in \N^8\: :\phantom{\frac{M_4}{\log q}}\right.\\
  0\leq a_x\log p+b_x\log q+c_x\log r,a_y\log p+b_y\log q+c_y\log r \leq M_4, \; \text{and}\\
  c_y<c_x\; \text{and}\;\left.\left(0\leq a_z\leq \frac{M_5}{\log p},0\leq b_z\leq M \; \text{or}\;\;  0\leq a_z\leq M ,0\leq b_z\leq \frac{M_5}{\log q} \right) \right\},
 \end{multline*}
 if $\alpha_x=\alpha_z<\alpha_y$ (Case II) we put
 \begin{multline*}
 \mathcal M_{x,y,z}=\left\{ (a_x,b_x,c_x,a_y,b_y,c_y,a_z,b_z)\in \N^8\: :\phantom{\frac{M_4}{\log q}}\right.\\
  0\leq a_x\log p+b_x\log q+c_x\log r,a_y\log p+b_y\log q+c_y\log r \leq M_4, \; \text{and}\\
  c_x<c_y\; \text{and}\;\left.\left(0\leq a_z\leq \frac{M_5}{\log p},0\leq c_z\leq M \; \text{or}\;\;  0\leq a_z\leq M ,0\leq c_z\leq \frac{M_5}{\log r} \right) \right\},
 \end{multline*}
 and if $\alpha_x=\alpha_y$ (Case III) we put 
 \begin{multline*}
 \mathcal M_{x,y,z}=\left\{ (a_x,b_x,c_x,a_y,b_y,c_y,a_z,b_z)\in \N^8\: :\phantom{\frac{M_4}{\log q}}\right.\\
 0\leq a_x\log p+b_x\log q+c_x\log r,a_y\log p+b_y\log q+c_y\log r \leq M_4, \; \text{and}\\
  c_x=c_y\; \text{and}\;\left.\left(0\leq a_z\leq \frac{M_5}{\log p},0\leq b_z\leq M \; \text{or}\;\;  0\leq a_z\leq M ,0\leq b_z\leq \frac{M_5}{\log q} \right) \right\}.
 \end{multline*}
We define
\begin{align*}
 \mathcal C_r&=\max_{\mathcal M_{x,y,z}}\left\{\nu_r\left( \frac{p^{a_z}q^{b_z}\left(p^{a_y}q^{b_y}r^{c_y}-1\right)}{p^{a_y}q^{b_y}-p^{a_x}q^{b_x}r^{c_x-c_y}}-1\right) \right\}& &\text{Case I},\\
 \mathcal C_r&=\max_{\mathcal M_{x,y,z}}\left\{\nu_r\left( \frac{p^{a_z}q^{b_z}\left(p^{a_y}q^{b_y}r^{c_y}-1\right)}{p^{a_y}q^{b_y}r^{c_y-c_x}-p^{a_x}q^{b_x}}-1\right) \right\}& &\text{Case II},\\
 \mathcal C_r&=\max_{\mathcal M_{x,y,z}}\left\{\nu_r\left( \frac{p^{a_z}q^{b_z}\left(p^{a_y}q^{b_y}r^{c_y}-1\right)_r}{ \left(p^{a_y}q^{b_y}-p^{a_x}q^{b_x}\right)_r}-1 \right) \right\}& &\text{Case III},\\
 \end{align*}
 where $(\cdot)_r$ denotes the $r$-free part. Then 
 $$\gamma_{\rho(6)}\leq \left\{ \begin{array}{cl} \frac{M_4}{\log r}+\mathcal C_r & \text{Cases I and II;}\\ \\ \frac{M_4+B_r}{\log r}+\mathcal C_r & \text{Case III.}\end{array} \right. $$
\end{proposition}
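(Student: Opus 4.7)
This proposition is the exact analog of Proposition \ref{lem:bound-absolute} with the roles of the primes $p$ and $r$ interchanged, and the plan is to rerun the proof of Proposition \ref{lem:bound-absolute} with this relabelling. Nothing in that earlier proof used that $p$ was the smallest prime: the $S$-unit system \eqref{eq:Sunit-sys-gen} is symmetric in $(p,q,r)$, Lemma \ref{lem:cases} was stated for the $\alpha$'s but (by the remark preceding Proposition \ref{prop:one-is-large}) has an identical statement for the $\gamma$'s via the permutation $\rho$, and Lemma \ref{lem:diff-big} is also symmetric in the three primes.

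Concretely, I would fix complementary pairs $(\rho(6),x)$ and $(y,z)$ with $y\in\{1,2,4\}$, coming from the three-case split of Lemma \ref{lem:cases} applied to the $\gamma$-exponents, and start from the $S$-unit equation
\[
s_z s_y - s_z - s_y \;=\; s_{\rho(6)} s_x - s_{\rho(6)} - s_x,
\]
which rearranges to
\[
\frac{s_z(s_y-1)}{s_y - s_x} - 1 \;=\; \frac{s_{\rho(6)}(s_x-1)}{s_y - s_x}.
\]
Applying $\nu_r$ to both sides and using $\nu_r(s_x-1)=0$ whenever $\gamma_x>0$, one has $\nu_r(s_y-s_x)=\gamma_y=\gamma_z$ in Cases I and II (because one of $\gamma_x,\gamma_y$ strictly dominates the other), while in Case III $\nu_r(s_y-s_x)=\gamma_x+\nu_r(p^{a_y-a_x}q^{b_y-b_x}-1)\le (M_4+B_r)/\log r$. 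Since the appropriate common power of $r$ cancels on the left-hand side so that its $\nu_r$ is bounded by $\mathcal C_r$, one obtains
\[
\gamma_{\rho(6)} \;\le\; \mathcal C_r+\nu_r(s_y-s_x),
\]
which is the claimed bound after inserting the two estimates for $\nu_r(s_y-s_x)$ and the bound $\gamma_y\le M_4/\log r$.

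The only point that needs attention is the description of the membership set $\mathcal M_{x,y,z}$: after dividing through by the common power $r^{\gamma_y}$ (or, in Case III, using the $r$-free convention $(\cdot)_r$) the exponent of $r$ in $s_z$ drops out of the formula, so the relevant constraints are on the $p$- and $q$-exponents $a_z$ and $b_z$ of $s_z$; Lemma \ref{lem:diff-big} forces one of them into $[0,M_5/\log p]$ or $[0,M_5/\log q]$ respectively while the other is at most $M$, mirroring the constraints on $b_z,c_z$ in Proposition \ref{lem:bound-absolute}. Case II is handled by interchanging the roles of $x$ and $y$ in Case I, exactly as in the original. The main obstacle is purely notational: carrying the swap $(p,q,r)\leftrightarrow (r,q,p)$ consistently through every formula without an indexing error. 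No new analytic ingredient is required.
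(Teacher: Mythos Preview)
Your proposal is correct and matches the paper's approach exactly: the paper does not give a separate proof of this proposition but states that the proofs of Propositions \ref{lem:bound-absolute-b} and \ref{lem:bound-absolute-c} ``are identical after an appropriate permutation of $p,q$ and $r$,'' which is precisely the relabelling argument you carry out. The only minor slip is in your parenthetical for Case II, where $\nu_r(s_y-s_x)=\gamma_x$ (not $\gamma_y$), but you correctly note that Case II is obtained by swapping $x$ and $y$, so this does not affect the argument.
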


\def\cprime{$'$}


\begin{thebibliography}{10}

\bibitem{Bugeaud:1996}
Y.~Bugeaud and M.~Laurent.
\newblock Minoration effective de la distance {$p$}-adique entre puissances de
  nombres alg\'ebriques.
\newblock {\em J. Number Theory}, 61(2):311--342, 1996.

\bibitem{deWeger:1987}
B.~M.~M. de~Weger.
\newblock Solving exponential {D}iophantine equations using lattice basis
  reduction algorithms.
\newblock {\em J. Number Theory}, 26(3):325--367, 1987.

\bibitem{Erdos:1988}
P.~Erd\"{o}s, C.~L. Stewart, and R.~Tijdeman.
\newblock Some {D}iophantine equations with many solutions.
\newblock {\em Compositio Math.}, 66(1):37--56, 1988.

\bibitem{Erdos:1934}
P.~Erdos and P.~Turan.
\newblock On a {P}roblem in the {E}lementary {T}heory of {N}umbers.
\newblock {\em Amer. Math. Monthly}, 41(10):608--611, 1934.

\bibitem{Luca:2019}
F.~Luca.
\newblock Diophantine {$\mathcal{S}$}-quadruples with two primes which are
  twin.
\newblock {\em Acta Math. Hungar.}, 159(2):589--602, 2019.

\bibitem{Matveev:2000}
E.~M. Matveev.
\newblock An explicit lower bound for a homogeneous rational linear form in
  logarithms of algebraic numbers. {II}.
\newblock {\em Izv. Ross. Akad. Nauk Ser. Mat.}, 64(6):125--180, 2000.

\bibitem{Pethoe:1986}
A.~Peth\H{o} and B.~M.~M. de~Weger.
\newblock Products of prime powers in binary recurrence sequences. {I}. {T}he
  hyperbolic case, with an application to the generalized {R}amanujan-{N}agell
  equation.
\newblock {\em Math. Comp.}, 47(176):713--727, 1986.

\bibitem{Sarkozy:1992}
G.~N. S\'{a}rk\"{o}zy.
\newblock On a problem of {P}. {Erd\H{o}s}.
\newblock {\em Acta Math. Hungar.}, 60(3-4):271--282, 1992.

\bibitem{Smart:DiGL}
N.~P. Smart.
\newblock {\em The algorithmic resolution of {D}iophantine equations},
  volume~41 of {\em London Mathematical Society Student Texts}.
\newblock Cambridge University Press, Cambridge, 1998.

\bibitem{Stewart:1997}
C.~L. Stewart and R.~Tijdeman.
\newblock On the greatest prime factor of {$(ab+1)(ac+1)(bc+1)$}.
\newblock {\em Acta Arith.}, 79(1):93--101, 1997.

\bibitem{Szalay:2013a}
L.~Szalay and V.~Ziegler.
\newblock On an {$S$}-unit variant of {D}iophantine {$m$}-tuples.
\newblock {\em Publ. Math. Debrecen}, 83(1-2):97--121, 2013.

\bibitem{Szalay:2013b}
L.~Szalay and V.~Ziegler.
\newblock {$S$}-{D}iophantine quadruples with two primes congruent to 3 modulo
  4.
\newblock {\em Integers}, 13:Paper No. A80, 9, 2013.

\bibitem{Szalay:2015}
L.~Szalay and V.~Ziegler.
\newblock {$S$}-{D}iophantine quadruples with {$S=\{2,q\}$}.
\newblock {\em Int. J. Number Theory}, 11(3):849--868, 2015.

\bibitem{sage}
{The Sage Developers}.
\newblock {\em {S}ageMath, the {S}age {M}athematics {S}oftware {S}ystem
  ({V}ersion 8.8)}, 2019.
\newblock {\tt https://www.sagemath.org}.

\bibitem{Yu:2007}
K.~Yu.
\newblock {$p$}-adic logarithmic forms and group varieties. {III}.
\newblock {\em Forum Math.}, 19(2):187--280, 2007.

\bibitem{Ziegler:2019}
V.~Ziegler.
\newblock On the existence of {$S$}-{D}iophantine quadruples.
\newblock {\em Glas. Mat. Ser. III}, 54(74)(2):279--319, 2019.

\end{thebibliography}

\end{document}